\title[$W^{s,\frac{n}{s}}$-maps with positive distributional Jacobians]{$W^{s,\frac{n}{s}}$-maps with positive distributional Jacobians}
\author{Siran Li}
\address[Siran Li]{Department of Mathematics, Rice University, MS 136 P.O. Box 1892, Houston,
Texas, 77251-1892, USA \newline Department of Mathematics, McGill University, Burnside Hall, 805 Sherbrooke Street West, Montreal, Quebec, H3A 0B9, Canada.}
\email{Siran.Li@rice.edu}
\author{Armin Schikorra}
\address[Armin Schikorra]{Department of Mathematics,
University of Pittsburgh,
301 Thackeray Hall,
Pittsburgh, PA 15260, USA}
\email{armin@pitt.edu}
\def\eps{\varepsilon}
\def\N{{\mathbb N}}
\def\S{{\mathbb S}}
\newtheorem{theorem}{Theorem}
\newtheorem{lemma}[theorem]{Lemma}
\newtheorem{proposition}[theorem]{Proposition}
\newtheorem{definition}[theorem]{Definition}
\newtheorem{example}[theorem]{Example}
\def\osc{\mathop{\rm osc\,}}
\def\diam{{\rm diam\,}}
\def\dist{{\rm dist\,}}
\def\curl{{\rm curl\,}}
\def\lip{{\rm Lip\,}}
\newcommand{\R}{\mathbb{R}}
\newcommand{\brac}[1]{\left (#1 \right )}
\newcommand{\Ep}{\bigwedge\nolimits}
\newcommand{\barint}{
\rule[.036in]{.12in}{.009in}\kern-.16in \displaystyle\int }
\newcommand{\barcal}{\text{$ \rule[.036in]{.11in}{.007in}\kern-.128in\int $}}
\def\mvint_#1{\mathchoice
          {\mathop{\vrule width 6pt height 3 pt depth -2.5pt
                  \kern -8pt \intop}\nolimits_{\kern -3pt #1}}%
%%%% P.S., 01/03/2001
% old definition had ...\nolimits_{#1}}
% \kern -3pt makes nicer distances between the integral sign
% and the domain of integration
%%%%
          {\mathop{\vrule width 5pt height 3 pt depth -2.6pt
                  \kern -6pt \intop}\nolimits_{#1}}%
          {\mathop{\vrule width 5pt height 3 pt depth -2.6pt
                  \kern -6pt \intop}\nolimits_{#1}}%
          {\mathop{\vrule width 5pt height 3 pt depth -2.6pt
                  \kern -6pt \intop}\nolimits_{#1}}}
\numberwithin{theorem}{section} \numberwithin{equation}{section}
\newcommand{\aleq}{\precsim}
\newcommand{\aeq}{\approx}
\let\latexchi\chi
\renewcommand\chi{\@ifnextchar_\sub@chi\latexchi}
\newcommand{\sub@chi}[2]{% #1 is _, #2 is the subscript
  \@ifnextchar^{\subsup@chi{#2}}{\latexchi^{}_{#2}}%
}
\newcommand{\subsup@chi}[3]{% #1 is the subscript, #2 is ^, #3 is the superscript
  \latexchi_{#1}^{#3}%
}
\newcommand{\Jac}{\operatorname{Jac}}
\begin{document}

\begin{abstract}
We extend the well-known result that any $f \in W^{1,n}(\Omega,\R^n)$, $\Omega \subset \R^n$ with strictly positive Jacobian is actually continuous: it is also true for fractional Sobolev spaces $W^{s,\frac{n}{s}}(\Omega)$ for any $s \geq \frac{n}{n+1}$, where the sign condition on the Jacobian is understood in a distributional sense.

Along the way we also obtain extensions to fractional Sobolev spaces $W^{s,\frac{n}{s}}$ of the degree estimates known for $W^{1,n}$-maps with positive or non-negative Jacobian, such as the sense-preserving property.
\end{abstract}

\maketitle
\tableofcontents
\sloppy

\section{Introduction}
The following well-known theorem  was first proven by Gold\v{s}te\u{\i}n and Vodopyanov \cite{VG77}; see also \cite{S88,FG95,HM02} and the recent extension to manifolds in \cite{GHP17}:
\begin{theorem}\label{th:old}
Let $\Omega \subset \R^n$ be an open set. If $f \in W^{1,n}(\Omega,\R^n)$ and 
\[\Jac(f) := \det (Df) > 0 \quad  \text{a.e. in $\Omega$},\] 
then $f$ is continuous%
% and open
.
\end{theorem}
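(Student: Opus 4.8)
The plan is to prove continuity locally, around an arbitrary point $x_0\in\Om$, by establishing a quantitative decay of $\osc_{B(x_0,r)}\bar f$ as $r\to 0$ for the precise (Lebesgue-point) representative $\bar f$ of $f$. \textbf{Step 1 (reduction to a monotonicity estimate on spheres).} Fix $B(x_0,R)\Subset\Om$. For a.e.\ $r\in(0,R)$ the restriction $f|_{\partial B(x_0,r)}$ lies in $W^{1,n}(\partial B(x_0,r);\R^n)$ with finite energy $\int_{\partial B(x_0,r)}\abs{Df}^n\,d\mathcal{H}^{n-1}$, so Morrey's embedding $W^{1,n}\hookrightarrow C^{0,1/n}$ on the $(n-1)$-dimensional sphere gives
\[
\brac{\osc_{\partial B(x_0,r)}f}^n\ \leq\ C(n)\,r\int_{\partial B(x_0,r)}\abs{Df}^n\,d\mathcal{H}^{n-1}.
\]
Integrating in $r$ over $(\rho/2,\rho)$ against $dr/r$ and using $\int_{\rho/2}^{\rho}dr/r=\log 2$ yields, for each $\rho$, a ``good radius'' $r(\rho)\in(\rho/2,\rho)$ with $\big(\osc_{\partial B(x_0,r(\rho))}f\big)^n\leq C(n)\int_{B(x_0,\rho)}\abs{Df}^n\,dx$, which tends to $0$ as $\rho\to 0$. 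Since $B(x_0,\rho/2)\subset B(x_0,r(\rho))$, it now suffices to prove the \emph{monotonicity inequality} $\osc_{\overline{B(x_0,r)}}\bar f\leq\osc_{\partial B(x_0,r)}f$ for a.e.\ $r$.

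\textbf{Step 2 (degree theory and sense-preservation).} For a good radius $r$, $f|_{\partial B(x_0,r)}$ is continuous, so the Brouwer degree $\deg(f,B(x_0,r),y)$ is defined for $y\notin f(\partial B(x_0,r))$, depends only on boundary values, is integer valued, is locally constant on $\R^n\setminus f(\partial B(x_0,r))$, and vanishes on its unbounded component $V_r$. I would also invoke the degree area formula, valid for $W^{1,n}$-maps continuous on the bounding sphere,
\[
\int_{\R^n}\deg(f,D,y)\,g(y)\,dy\ =\ \int_{D}g(f(x))\,\Jac(f)(x)\,dx,\qquad g\in C_c(\R^n),
\]
applied with $D=B(x_0,r)$, with sub-balls $D=B'\Subset B(x_0,r)$, and with annular domains $D=B(x_0,r)\setminus\overline{B'}$. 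Inserting $\Jac(f)>0$ a.e.\ and arbitrary $g\geq 0$ forces $\deg(f,D,\cdot)\geq 0$ everywhere it is defined, for all such $D$ simultaneously (the sense-preserving property); moreover on any sub-ball on which $f$ takes values near a point $y_0$ on a set of positive measure, a suitable bump $g$ makes the right-hand side \emph{strictly} positive, so $\deg(f,D,\cdot)\geq 1$ on a set near $y_0$.

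\textbf{Step 3 (the crux: the image of the ball is trapped inside the image of the sphere).} Suppose, for a contradiction, that $\bar f(z_0)=:y_0\in V_r$ for some $z_0\in B(x_0,r)$ which is a Lebesgue point of $f$ with value $y_0$. From $\abs{B(z_0,\delta)}^{-1}\int_{B(z_0,\delta)}\abs{f-y_0}^n\,dx\to 0$ together with the spherical Morrey estimate of Step 1, a measure-theoretic selection of radii produces $\delta_k\to 0$ with $\sup_{\partial B(z_0,\delta_k)}\abs{f-y_0}\to 0$, so for $k$ large the compact connected set $f(\partial B(z_0,\delta_k))$ lies in a small ball $B(y_0,\eps_k)\subset V_r$. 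Applying the area formula on $B(z_0,\delta_k)$ with a bump supported in $B(y_0,\eps_k)$ and equal to $1$ near $y_0$ (legitimate because $z_0$ is a Lebesgue point with value $y_0$, so $f$ is close to $y_0$ on a positive-measure subset of $B(z_0,\delta_k)$, where $\Jac(f)>0$) yields $y_1\in B(y_0,\eps_k)\setminus f(\partial B(z_0,\delta_k))$ with $\deg(f,B(z_0,\delta_k),y_1)\geq 1$. But $y_1\in V_r$ forces $\deg(f,B(x_0,r),y_1)=0$, and since $y_1$ avoids $f(\partial B(z_0,\delta_k))\cup f(\partial B(x_0,r))$, additivity of the degree over $B(x_0,r)=B(z_0,\delta_k)\cup\partial B(z_0,\delta_k)\cup\big(B(x_0,r)\setminus\overline{B(z_0,\delta_k)}\big)$ gives $\deg\big(f,B(x_0,r)\setminus\overline{B(z_0,\delta_k)},y_1\big)\leq -1$, contradicting sense-preservation on that annular domain. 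Hence $\bar f(B(x_0,r))\cap V_r=\emptyset$; as $\bar f=f$ on $\partial B(x_0,r)$, this means $\bar f(\overline{B(x_0,r)})\subset\R^n\setminus V_r\subset\overline{\mathrm{conv}}\,f(\partial B(x_0,r))$, which has diameter $\leq\osc_{\partial B(x_0,r)}f$ — exactly the monotonicity inequality. Combined with Step 1 this gives continuity of $\bar f$ at $x_0$, with the modulus $\osc_{B(x_0,\rho/2)}\bar f\leq C\big(\int_{B(x_0,\rho)}\abs{Df}^n\,dx\big)^{1/n}$.

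\textbf{Main obstacle.} The decisive point is Step 3 — turning the pointwise a.e.\ sign condition on $\Jac(f)$ into the topological trapping statement. It leans on the \emph{strictness} $\Jac(f)>0$ (so the degree-integral is genuinely positive on every sub-ball, producing a positive degree near every attained value) and on having $\deg\geq 0$ available \emph{simultaneously} on the ball, on small sub-balls, and on the interpolating annulus, which is what generates the sign contradiction. Secondary technical care is needed to handle the exceptional sets (the measure/capacity-zero set where $\bar f$ fails to be a genuine Lebesgue point, and the null set of bad radii) and to justify the degree area formula for merely-$W^{1,n}$ maps; these are standard but not automatic. I would expect the fractional analogue pursued in the rest of the paper to follow the same skeleton, with Morrey's embedding on spheres replaced by a fractional trace/embedding estimate and Brouwer degree theory replaced by its $VMO$-type fractional counterpart.
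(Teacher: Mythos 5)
Your proof is correct, and it follows the classical degree\-/theoretic line of \cite{S88,FG95,HM02}; note that the paper does not actually prove Theorem~\ref{th:old} (it is cited), so the relevant comparison is with the paper's proof of the fractional version (Theorem~\ref{th:main}). Your Steps~1 and~2 map almost verbatim onto the paper's machinery: the sphere\-/restriction estimate is Lemma~\ref{la:restriction}, the monotonicity/sense\-/preservation of the degree is Propositions~\ref{pr:degest}--\ref{pr:degeststrict}, and the final logarithmic modulus\-/of\-/continuity estimate is Proposition~\ref{pr:osciscontinuity}. The genuine divergence is your Step~3. You trap $\bar f(B(x_0,r))$ inside $\overline{\mathrm{conv}}\,f(\partial B(x_0,r))$ by combining the change\-/of\-/variables/degree formula $\int_{\R^n}\deg(f,D,y)\,g(y)\,dy=\int_D g(f)\,\Jac f$, Lebesgue points of $f$, and additivity (excision) of the degree over an annulus. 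All of these are hard\-/wired to the existence of an a.e.\ pointwise Jacobian and therefore do not survive the passage to $W^{s,\frac{n}{s}}$ with $s<1$, where $\Jac(f)$ is only a distribution and no area/degree formula is available. The paper instead replaces Step~3 by a single integration\-/by\-/parts argument, Proposition~\ref{pr:diamest}, built on the radial weight $\pi$ of Lemma~\ref{la:weirdode2}: replacing $f$ by $g:=(f-q)\,\pi(|f-q|)+q$ leaves the boundary data unchanged, which forces $\Jac(f)[\,1-\det W(f-q)\,]=0$, and since $1-\det W(f-q)$ is a nonnegative Sobolev test function that vanishes only where $|f-q|$ is small, strict positivity of $\Jac(f)$ yields that $f(B(r))$ is (up to a null set) contained in a ball of radius comparable to $\diam f(\partial B(r))$. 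This gives a weaker monotonicity, $\osc_{B(r)}\bar f\leq\Lambda\,\osc_{\partial B(r)}f$ with a dimensional constant $\Lambda>1$ rather than your $\Lambda=1$, but the logarithmic averaging in Step~1/Proposition~\ref{pr:osciscontinuity} is insensitive to multiplicative constants, and the paper's argument only requires testing $\Jac(f)$ against Lipschitz functions of $f$ itself---exactly what the threshold $s\geq\frac{n}{n+1}$ legitimizes. One small technical point in your Step~3 worth flagging: to speak of $\deg(f,B(z_0,\delta_k),\cdot)$ a.e.\ and to select $y_1\notin f(\partial B(z_0,\delta_k))$, you should observe that $f(\partial B(z_0,\delta_k))$ is Lebesgue\-/null in $\R^n$; this holds because $f|_{\partial B(z_0,\delta_k)}\in W^{1,n}$ of an $(n-1)$\-/manifold satisfies Lusin's property~(N), so its image has $\sigma$\-/finite $\mathcal{H}^{n-1}$\-/measure.
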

The strict inequality $\Jac(f) > 0$ is necessary as the following counterexample shows:
\begin{example}\label{ex:counter}
Let $B$ denote the unit ball in $\R^n$. Let $\tilde{f} \in W^{1,n}(B,\R)$ be discontinuous, e.g. $\tilde{f}(x) := \log \log \frac{2}{|x|}$. Set 
\[
 f(x) := (\tilde{f}(x),0,\ldots,0).
\]
Clearly $f \in W^{1,n}(B,\R^n)$ and $\Jac(f) = \det(Df) \equiv 0$. However, $f$ is still discontinuous.
\end{example}
The aim of this note is to give a reasonable extension to Theorem~\ref{th:old} to fractional Sobolev spaces $W^{s,p}(\Omega,\R^n)$, $s \in (0,1)$. These are the spaces of maps $f \in L^p(\Omega,\R^n)$ with finite $W^{s,p}$-Gagliardo semi-norm
\[
 [f]_{W^{s,p}(\Omega)} := \brac{\int_{\Omega}\int_{\Omega} \frac{|f(x)-f(y)|^p}{|x-y|^{n+sp}}\, dx\, dy}^{\frac{1}{p}} < \infty.
\]
Clearly, for a pointwise definition of the Jacobian of $f$ to make sense, $f$ should be almost everywhere differentiable; however, as a distributional operator, the Jacobian also exists for maps in fractional Sobolev spaces $W^{s,p}$, where $s < 1$ is large enough. For the sake of presentation we restrict our attention to the critical scaling, that is to the Sobolev spaces $W^{s,\frac{n}{s}}$, $s \in (0,1)$. The space $W^{s,p}_0(\Omega)$ denotes, as usual, the closure of $C_c^\infty(\Omega)$-functions in the $W^{s,p}$-norm.
\begin{lemma}\label{la:weakjac}
Let $\Omega \subset \R^n$ be open with smooth boundary, $n \geq 2$. For\footnote{The case $s =1$ is also true (with $W^{0,\infty}$ replaced by $BMO$): it is the famous theorem by Coifman-Lions-Meyer-Semmes \cite{CLMS}.} $s \in (\frac{n-1}{n},1)$ and $f \in W^{s,\frac{n}{s}}(\Omega)$ the Jacobian operator extends to a bounded linear operator on $W^{(1-s)n,\frac{1}{1-s}}_0(\Omega)$ in the following sense. The operator
\[
\Jac(f)[\varphi]:=\lim_{k \to \infty} \int_{\Omega} \det(Df_k)\, \varphi_k
\]
is well-defined for any $f_k \in C^\infty(\overline{\Omega})$ which is a smooth approximation of $f \in W^{s,\frac{n}{s}}(\Omega)$ and any $\varphi_k \in C_c^\infty(\Omega)$ which is a smooth approximation of $\varphi$ in $W^{(1-s)n,\frac{1}{1-s}}(\Omega)$.
\end{lemma}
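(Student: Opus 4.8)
The plan is to establish the a priori bilinear estimate
\[
\left|\int_{\Omega}\det(Df)\,\varphi\right|\;\le\;C\,\|f\|_{W^{s,\frac{n}{s}}(\Omega)}^{n}\,\|\varphi\|_{W^{(1-s)n,\frac{1}{1-s}}(\Omega)},\qquad f\in C^\infty(\overline{\Omega}),\ \varphi\in C_c^\infty(\Omega),
\]
and then to pass to the limit. Granting this estimate, multilinearity of $g\mapsto\det(Dg)$, density of $C^\infty(\overline{\Omega})$ in $W^{s,\frac{n}{s}}(\Omega)$, and density of $C_c^\infty(\Omega)$ in $W^{(1-s)n,\frac{1}{1-s}}_0(\Omega)$ show that $\int_{\Omega}\det(Df_k)\,\varphi_k$ is a Cauchy sequence: expanding the difference of two terms as a telescoping sum, every summand carries a single factor of the form $f_k^i-f_m^i$ or $\varphi_k-\varphi_m$ (which tends to $0$ in the relevant norm) multiplied by factors that remain bounded. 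The same telescoping shows that the limit does not depend on the approximating sequences and that $\varphi\mapsto\Jac(f)[\varphi]$ is linear and bounded on $W^{(1-s)n,\frac{1}{1-s}}_0(\Omega)$ with operator norm $\le C\|f\|_{W^{s,\frac{n}{s}}(\Omega)}^{n}$; this is exactly the assertion.

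\textbf{Reduction to $\R^n$.} Since $\partial\Omega$ is smooth, fix a linear extension operator $W^{s,\frac{n}{s}}(\Omega)\to W^{s,\frac{n}{s}}(\R^n)$ that maps $C^\infty(\overline{\Omega})$ into $C_c^\infty(\R^n)$, and extend $\varphi\in C_c^\infty(\Omega)$ by zero. This is the one place where the hypothesis $s>\frac{n-1}{n}$ enters: it forces $(1-s)n<1$, so the test functions belong to a fractional Sobolev space of order below $1$ in the sense of the introduction, and the zero extension $W^{(1-s)n,\frac{1}{1-s}}_0(\Omega)\to W^{(1-s)n,\frac{1}{1-s}}(\R^n)$ is bounded. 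The extended test function is still compactly supported in $\Omega$ and the extended $f$ is unchanged there, so $\int_{\Omega}\det(Df)\,\varphi$ equals the corresponding integral over $\R^n$; it therefore suffices to prove, for $g\in C_c^\infty(\R^n,\R^n)$ and $\psi\in C_c^\infty(\R^n)$,
\[
\left|\int_{\R^n}\det(Dg)\,\psi\right|\;\lesssim\;\prod_{i=1}^n[g^i]_{\dot W^{s,\frac{n}{s}}(\R^n)}\,[\psi]_{\dot W^{(1-s)n,\frac{1}{1-s}}(\R^n)},
\]
where $[\,\cdot\,]_{\dot W^{\sigma,q}(\R^n)}$ is the Gagliardo seminorm over all of $\R^n$; these seminorms are dominated by the full $\Omega$-norms via the extension.

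\textbf{A fractional Coifman--Lions--Meyer--Semmes estimate.} The heart of the matter is to show that $\det(Dg)\in\mathcal H^1(\R^n)$ for $g\in C_c^\infty(\R^n,\R^n)$, with
\[
\|\det(Dg)\|_{\mathcal H^1(\R^n)}\;\lesssim\;\prod_{i=1}^n[g^i]_{\dot W^{s,\frac{n}{s}}(\R^n)},
\]
the fractional counterpart of the theorem quoted in the footnote (the scaling is consistent for every $s$, since $\det(Dg)$ scales like $L^1$ while each $\dot W^{s,n/s}$-seminorm is scale invariant). One starts from the algebraic identity $\det(Dg)=\sum_{j=1}^n\partial_j\bigl(g^1\,\operatorname{cof}(Dg)_{1j}\bigr)$ together with the Piola identity $\sum_{j=1}^n\partial_j\operatorname{cof}(Dg)_{1j}=0$, which exhibit $\det(Dg)$ as a div--curl quantity; then one peels off the rows $g^1,\dots,g^n$ one at a time, using a Littlewood--Paley/paraproduct decomposition and the fractional div--curl (compensated compactness) estimates in Hardy space, so that each $g^i$ is charged with exactly $s$ derivatives. (One may also bypass $\mathcal H^1$ and prove the dual bilinear bound of the previous paragraph directly by the same paraproduct analysis.) This is the main obstacle: the estimate is at critical scaling, so a crude fractional Leibniz rule loses, and one must genuinely use the divergence structure of the determinant and the divergence-freeness of the cofactor matrix, with care at the endpoint Littlewood--Paley frequencies.

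\textbf{Duality and conclusion.} The critical Sobolev embedding $\dot W^{(1-s)n,\frac{1}{1-s}}(\R^n)\hookrightarrow\mathrm{BMO}(\R^n)$ holds precisely because $(1-s)n\cdot\frac{1}{1-s}=n$; it is elementary, obtained by estimating the mean oscillation of $\psi$ on a ball by the average of $|\psi(x)-\psi(y)|$ over the ball and then, via Jensen, by the Gagliardo integrand. Combined with $\mathcal H^1$--$\mathrm{BMO}$ duality — the pairing being legitimate since $\det(Dg)\in\mathcal H^1$ has vanishing mean while $\psi\in C_c^\infty$, so it agrees with the ordinary integral — the previous step gives
\[
\left|\int_{\R^n}\det(Dg)\,\psi\right|\;\le\;\|\det(Dg)\|_{\mathcal H^1(\R^n)}\,\|\psi\|_{\mathrm{BMO}(\R^n)}\;\lesssim\;\prod_{i=1}^n[g^i]_{\dot W^{s,\frac{n}{s}}(\R^n)}\,[\psi]_{\dot W^{(1-s)n,\frac{1}{1-s}}(\R^n)}.
\]
Undoing the reductions of the second paragraph turns this into the bilinear estimate on $\Omega$, and the density/limit argument of the first paragraph completes the proof.
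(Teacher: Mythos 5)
Your proposal takes the route the paper attributes to \cite{WY99a} --- a Littlewood--Paley/paraproduct analysis of $\det(Dg)$, an $\mathcal H^1$ bound, and $\mathcal H^1$--$\mathrm{BMO}$ duality --- whereas the paper presents the simpler argument of Brezis--Nguyen \cite{Brezis-Nguyen-2011} as adapted in \cite{LS18}: extend $f$ and $\varphi$ harmonically to $\R^{n+1}_+$, rewrite $\int_{\R^n}\det(Df)\,\varphi$ via Stokes' theorem as $\int_{\R^{n+1}_+}\det\big(DF\,|\,D\Phi\big)$, apply H\"older with suitable powers of $t$ as weights, and identify the resulting weighted $L^p$-norms of $DF$ and $D\Phi$ with the Gagliardo seminorms by the trace characterization of harmonic extensions. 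The harmonic-extension route is self-contained modulo that one trace lemma; in it, the hypothesis $s>\frac{n-1}{n}$ (equivalently $(1-s)n\in(0,1)$) is what makes the trace identification apply to $\Phi$. In your write-up, by contrast, you locate the role of $s>\frac{n-1}{n}$ in the boundedness of zero extension of test functions, which is not where it bites --- the zero extension is trivial for $\varphi\in C_c^\infty(\Omega)$, and the condition is really needed for the Hardy-space estimate itself to close.

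And that estimate is the genuine gap: the fractional Coifman--Lions--Meyer--Semmes inequality $\|\det(Dg)\|_{\mathcal H^1(\R^n)}\lesssim\prod_{i=1}^n[g^i]_{\dot W^{s,n/s}(\R^n)}$ is asserted with a correct scaling check and a road map (Piola identity, div--curl structure, careful frequency decomposition), but it is not proved. You flag it yourself as ``the main obstacle,'' and you are right: a naive fractional Leibniz rule overshoots the critical scaling, the $n$-linear ``peeling'' via cofactors leaves degree-$(n-1)$ multilinear remainders that must each be handled with compensation, and one must check that the Besov seminorms $\dot W^{s,n/s}=\dot B^s_{n/s,n/s}$, rather than the Triebel--Lizorkin norms more native to Littlewood--Paley analysis, really do suffice on the right-hand side. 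That is precisely the content of \cite{WY99a}, not a detail to be filled in. The surrounding reductions --- extension to $\R^n$, the critical embedding $\dot W^{(1-s)n,1/(1-s)}(\R^n)\hookrightarrow\mathrm{BMO}(\R^n)$, the $\mathcal H^1$--$\mathrm{BMO}$ pairing (legitimate since $\det(Dg)$ has mean zero for compactly supported $g$), and the multilinear telescoping to pass to the limit --- are all sound; but without either supplying that Hardy-space estimate or switching to the harmonic-extension route the paper uses, the proof is incomplete at its crux.
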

We recall a proof of Lemma~\ref{la:weakjac} in \Cref{s:fracsobest}.

We will restrict our attention to the case $s \geq \frac{n}{n+1}$. This threshold appears in several situations on degree-type estimates in fractional Sobolev spaces; see, e.g., \cite{GO19,SVS19}. It is exactly the case when (up to the boundary data) a map $f \in W^{s,\frac{n}{s}}$ can serve as a testfunction for its own Jacobian $\Jac(f)$. \Cref{la:weakjac} warrants the following definition for a distributional Jacobian.
\begin{definition}\label{def:jac}
Assume $s \geq \frac{n}{n+1}$ and $\Omega \subset \R^n$ is a smooth, bounded domain. Let $f \in W^{s,\frac{n}{s}}(\Omega,\R^n)$. 

\begin{itemize}
\item We say $\Jac(f) \geq 0$ in $\Omega$ if for any $\varphi \in W_0^{s,\frac{n}{s}}(\Omega)$, $\varphi \geq 0$ a.e., there holds 
\[
\Jac(f)[\varphi] \geq 0.
\]
\item We say $\Jac(f) > 0$ if $\Jac(f) \geq 0$ and for any $\varphi \in W_0^{s,\frac{n}{s}}(\Omega)$, $\varphi \geq 0$ a.e., 
\[
\Jac(f)[\varphi] = 0 \quad \text{implies that $\varphi \equiv 0$}.
\]
\end{itemize}
\end{definition}

Our main result is the following version of Theorem~\ref{th:old} for fractional Sobolev spaces $W^{s,\frac{n}{s}}$.
\begin{theorem}\label{th:main}
Let $f \in W^{s,\frac{n}{s}}(\Omega,\R^n)$, $s \geq \frac{n}{n+1}$, for some open and bounded set $\Omega \subset \R^n$ with smooth boundary. 

If $\Jac(f) > 0$ then $f$ is continuous. 
% Moreover, for any open set $U \subset \subset \Omega$ also $f(U)$ is open.
\end{theorem}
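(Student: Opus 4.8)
The plan is to carry the Gold\v{s}te\u{\i}n--Vodopyanov strategy -- continuity of non-degenerate sense-preserving maps -- over to the fractional scale, using the Brezis--Nirenberg $\mathrm{VMO}$-degree on the spheres $\partial B_\rho(a)$ in place of the Brouwer degree, with \Cref{la:weakjac} providing the bridge that makes the Jacobian--degree bookkeeping legitimate.

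To begin, \Cref{la:weakjac} and \Cref{def:jac} identify $\Jac(f)$ with a nonnegative bounded functional on $C_c^\infty(\Omega)$, hence with a nonnegative Radon measure $\mu$ on $\Omega$; the strict positivity $\Jac(f)>0$ is exactly the statement that $\mu(U)>0$ for every nonempty open $U\Subset\Omega$, seen by testing against a nonnegative bump supported in $U$. Since continuity is local, fix $a\in\Omega$ and $B_{r_0}(a)\Subset\Omega$. The key algebraic point is that, for $\rho<r_0$ and any Lipschitz $\psi$ whose support misses the ($\mathcal L^n$-negligible) essential range of $f|_{\partial B_\rho(a)}$, the composition $\psi\circ f$ is an admissible test object for $\Jac(f)$: indeed $\psi\circ f\in W^{s,\frac{n}{s}}_0(B_\rho(a))$, and $W^{s,\frac{n}{s}}_0(B_\rho(a))\hookrightarrow W^{(1-s)n,\frac{1}{1-s}}_0(B_\rho(a))$ -- an equality of spaces when $s=\frac{n}{n+1}$ and a critical fractional Sobolev embedding when $s>\frac{n}{n+1}$. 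This is the one and only place the threshold is used.

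The heart of the matter is the Jacobian--degree identity. For a.e.\ $\rho\in(0,r_0)$ the trace $f|_{\partial B_\rho(a)}$ lies in the critical space $W^{\frac{s(n-1)}{n},\frac ns}(\partial B_\rho(a))\subset\mathrm{VMO}(\partial B_\rho(a);\R^n)$, so $\deg(f,B_\rho(a),y)$ is defined for $y$ ranging over the open full-measure set of points from which $f|_{\partial B_\rho(a)}$ stays $\mathrm{BMO}$-bounded away. Taking smooth $f_k\to f$ and passing to the limit in the classical identity $\int_{B_\rho}\det(Df_k)\,(\psi\circ f_k)=\int_{\R^n}\deg(f_k,B_\rho,y)\,\psi(y)\,dy$ -- the left side tending to $\Jac(f)[\psi\circ f]$ since $\psi\circ f_k\to\psi\circ f$ in $W^{(1-s)n,\frac{1}{1-s}}$, the right side by $\mathrm{VMO}$-continuity of the degree once $f_k|_{\partial B_\rho}\to f|_{\partial B_\rho}$ in $\mathrm{VMO}$ -- gives, for a.e.\ such $\rho$, both $\Jac(f)[\psi\circ f]=\int_{\R^n}\deg(f,B_\rho(a),y)\,\psi(y)\,dy$ and, letting $\psi\uparrow1$, $\mu(B_\rho(a))=\int_{\R^n}\deg(f,B_\rho(a),y)\,dy$. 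Now $\psi\circ f\geq0$ whenever $\psi\geq0$, so $\Jac(f)\geq0$ forces the sense-preserving property $\deg(f,B_\rho(a),\cdot)\geq0$ a.e.; additivity of the degree over $B_{\rho_2}=B_{\rho_1}\cup(B_{\rho_2}\setminus\overline{B_{\rho_1}})$ together with sense-preservation on the annulus makes $\deg(f,B_{\rho_1}(a),\cdot)\leq\deg(f,B_{\rho_2}(a),\cdot)$, and since $\deg$ is a nonnegative integer and $\mu(B_\rho(a))>0$, the nested sets $E_\rho:=\{\deg(f,B_\rho(a),\cdot)\geq1\}$ are nonempty and contained in the essential closure $\overline{f(B_\rho(a))}$.

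It remains to shrink $\rho$. The $\overline{E_\rho}$ (reduced to bounded representatives) form a decreasing family of nonempty compacta, so $\bigcap_\rho\overline{E_\rho}\ni\ell(a)$, and it suffices to show $\diam E_{\rho_j}\to0$ along some $\rho_j\downarrow0$. Quantitative $\mathrm{VMO}$-degree theory bounds $\diam E_\rho$ by the oscillation of $f|_{\partial B_\rho(a)}$, controlled by $[f|_{\partial B_\rho(a)}]_{W^{\frac{s(n-1)}{n},\frac ns}(\partial B_\rho(a))}$, and a fractional Fubini/trace inequality bounding, on average over $t\in(\rho,2\rho)$, this sphere-seminorm by the Gagliardo energy of $f$ on $B_{2\rho}(a)$, together with absolute continuity of that energy as $\rho\downarrow0$, selects the required $\rho_j$. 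One then gets $\bigcap_j\overline{E_{\rho_j}}=\{\ell(a)\}$ and, combining $E_\rho\subset\overline{f(B_\rho(a))}$ with the Sobolev estimate to preclude $f$ from taking values far from $E_{\rho_j}$ on a non-negligible portion of $B_{\rho_j}(a)$, concludes that the precise representative of $f$ is continuous at $a$ with value $\ell(a)$; as $a$ was arbitrary, $f$ is continuous. I expect the main obstacle to be the Jacobian--degree identity for merely $W^{s,\frac ns}$-maps, which forces one to control simultaneously the composition $\psi\circ f$ in the energy space $W^{(1-s)n,\frac1{1-s}}$ and the $\mathrm{VMO}$-convergence of the boundary traces, both sitting exactly at the critical scaling where $s\geq\frac{n}{n+1}$ is needed; a secondary difficulty is the fractional Fubini step and the final passage from density-type convergence to honest continuity.
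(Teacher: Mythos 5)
Your overall architecture---exploiting a Jacobian--degree identity and the sign condition $\Jac(f)>0$ to confine the essential range of $f$ on small balls and then shrink to a point---is the right one and parallels the paper's strategy, but the technical execution differs and leaves some genuine gaps.

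Compared to the paper: instead of the nested level sets $E_\rho=\{\deg\geq1\}$ and a VMO-degree identity for general Lipschitz $\psi$, the paper constructs a single explicit radial cut-off $\pi_\lambda$ (\Cref{la:weirdode2}), forms the Lipschitz matrix field $W(v)=\pi(|v|)I+\frac{\pi'(|v|)}{|v|}v\otimes v$, and tests $\Jac(f)$ with the one nonnegative, compactly supported function $1-\det W(f-q)$. Applying $\Jac(f)>0$ to this test function gives \Cref{pr:diamest} directly: the essential range of $f$ on $B(r)$ is contained in a ball of radius $\Lambda\,\diam f(\partial B(r))$. Continuity then follows not from a soft shrinking-of-nested-compacta argument but from the monotonicity-of-oscillations mechanism of \Cref{pr:osciscontinuity}, which, via \Cref{la:restriction} and Sobolev embedding on spheres, gives a quantitative logarithmic modulus of continuity. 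Your route can be made to work, and testing with a generic nonnegative Lipschitz $\psi$ supported outside the convex hull of $f(\partial B_\rho(a))$ yields the same containment (even with constant $1$), but the following points need attention.

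First, the VMO detour is unnecessary and the Sobolev index you quote for the trace is the wrong one: by \Cref{la:restriction}, the \emph{restriction} (not trace) $f|_{\partial B(x_0,r)}$ lies in $W^{s,\frac ns}(\partial B(x_0,r))$ for a.e.\ $r$, which embeds into $C^{0,s/n}(\partial B(x_0,r))$ since $s\cdot\frac ns=n>n-1$. So $f|_{\partial B_\rho}$ is honestly continuous on a.e.\ sphere and the classical Brouwer degree is available; the space $W^{\frac{s(n-1)}{n},\frac ns}$ you write is the trace space on $\partial\Omega$, not what the restriction theorem gives.

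Second, the convergence $\psi\circ f_k\to\psi\circ f$ in $W^{(1-s)n,\frac1{1-s}}$ needed to pass to the limit in the left side of your degree identity is not automatic; continuity of the Lipschitz superposition operator on fractional Sobolev spaces must be proved or cited (the corresponding step in the paper's proof of \Cref{pr:degest}, strong convergence of $\det W(f_\eps)$, has exactly the same hidden subtlety, so this is shared rather than a unique defect).

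Third, the passage $\psi\uparrow 1$ to assert $\mu(B_\rho(a))=\int_{\R^n}\deg(f,B_\rho(a),y)\,dy$ is not justified: as $\psi\uparrow 1$ off the compact set $f(\partial B_\rho)$, the functions $\psi\circ f$ converge pointwise to the indicator of $\{x:f(x)\notin f(\partial B_\rho)\}$, so the left side would see $\mu\big(B_\rho\setminus f^{-1}(f(\partial B_\rho))\big)$, not $\mu(B_\rho)$, and there is no a priori control of $\mu$ on $f^{-1}(f(\partial B_\rho))$. Fortunately this identity is not needed. What you actually use is: for each fixed nonnegative $\psi$ supported outside the convex hull of $f(\partial B_\rho)$, $\deg\equiv 0$ on $\supp\psi$, hence $\Jac(f)[\psi\circ f]=0$ while $\psi\circ f\geq 0$, and strict positivity forces $\psi\circ f\equiv 0$. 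Spelled out, this is exactly the mechanism behind \Cref{pr:diamest}. Your closing clause ``combining $E_\rho\subset\overline{f(B_\rho(a))}$ with the Sobolev estimate to preclude $f$ from taking values far from $E_{\rho_j}$'' asserts rather than proves this crucial preclusion, and should be replaced by the explicit test-function argument just described.
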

By the counterexample, \Cref{ex:counter}, there is no hope of getting Theorem~\ref{th:main} under merely the assumption $\Jac(f) \geq 0$. However, as it is used for the planar Monge-Amp\`ere equation, a curl-free condition is a remedy -- similar properties are known, {\it e.g.} for $W^{1,n}$-maps, see \cite[Lemma 2.1.]{P04}, or $C^{0,\alpha}$, $\alpha > \frac{2}{3}$, see \cite{LP17}. Namely we have
\begin{theorem}\label{th:main2}
Let $f = (f_1,f_2) \in W^{s,\frac{2}{s}}(\Omega,\R^2)$ for some open and bounded set $\Omega \subset \R^2$ with smooth boundary and for some $s \geq \frac{2}{3}$. If $\Jac(f) \geq 0$ and if $\curl(f) = 0$ in distributional sense, i.e. if
\[
 \curl(f)[\varphi] = -\int_{\Omega} f_2\, \partial_1\varphi  - f_1\, \partial_2 \varphi= 0 \quad \mbox{for all $\varphi \in C_c^\infty(\Omega)$},
\]
then $f$ is continuous.
\end{theorem}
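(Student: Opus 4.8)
The plan is to reduce Theorem~\ref{th:main2} to Theorem~\ref{th:main} by showing that the curl-free condition forces $f$ to be (up to a constant) the gradient of a scalar function $u$, and that $\Jac(f)\geq 0$ together with curl-freeness upgrades the distributional Jacobian to a strictly positive one in the sense of Definition~\ref{def:jac}. First I would use $\curl(f)=0$ in the distributional sense to produce a potential: since $f=(f_1,f_2)\in W^{s,\frac{2}{s}}(\Omega,\R^2)$ and $\partial_1 f_2 = \partial_2 f_1$ as distributions, on the simply connected pieces of $\Omega$ there is $u$ with $\nabla u = (f_1,f_2)$ (on a general smooth bounded $\Omega$ one works locally, which is enough for a local continuity statement); the regularity $f\in W^{s,\frac{2}{s}}$ gives $u\in W^{s+1,\frac{2}{s}}_{\loc}$, equivalently $u$ has the same scaling as a $W^{1,2}$-in-$n=2$ ``critical'' function one dimension up. Then $\Jac(f)=\det(Df)=\det(D^2 u)=\det(\operatorname{Hess} u)$, the Monge–Amp\`ere operator of $u$.

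The second step is to argue that $\det(\operatorname{Hess} u)\geq 0$ distributionally, for the potential of a curl-free map, already implies $\Jac(f)>0$ in the sense of Definition~\ref{def:jac}, \emph{unless} $f$ is constant — and a constant map is trivially continuous, so we may assume it is not. The mechanism is the same as in the $W^{1,n}$ case \cite[Lemma 2.1]{P04} and the H\"older case \cite{LP17}: for a gradient map, $\det(Df)=0$ on an open set would force $f$ (hence $\nabla u$) to be degenerate there in a way propagated by the curl-free structure, so the only way to have $\Jac(f)[\varphi]=0$ for some nontrivial $\varphi\geq 0$ is to have $\det(\operatorname{Hess} u)\equiv 0$ globally, which by the structure theory of convex/Monge–Amp\`ere functions (or directly from the distributional identity) makes $u$ affine along lines and ultimately $f$ constant. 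More robustly, I would invoke the ``testfunction for its own Jacobian'' property highlighted after \Cref{la:weakjac}: because $s\geq\frac23$, one can test $\Jac(f)$ against components built from $f$ itself, and the integration-by-parts identity available for gradient fields turns $\Jac(f)[\varphi]$ into a signed quadratic quantity in $\nabla u$ whose vanishing pins down the support of $\varphi$. Once $\Jac(f)>0$ is established, Theorem~\ref{th:main} applies verbatim and yields continuity of $f$.

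The main obstacle I anticipate is exactly the bootstrap from $\Jac(f)\geq 0$ to $\Jac(f)>0$ in the fractional setting: in the classical $W^{1,n}$ proofs one uses pointwise a.e. differentiability and the chain of implications ``$\det(Df)=0$ on a ball $\Rightarrow$ $f$ maps that ball into a lower-dimensional set $\Rightarrow$ $\curl(f)=0$ forces $f$ locally constant'', and none of these pointwise statements are directly available when $s<1$. The fix is to stay at the level of distributions: the curl-free hypothesis is already distributional, the potential $u$ exists in a fractional space, and the vanishing of the distributional Monge–Amp\`ere measure $\det(\operatorname{Hess}u)$ on an open set $U$ should be shown to imply $u$ is affine on $U$ via a mollification argument — mollify $u$, note $\det(\operatorname{Hess}u_\epsilon)\to 0$ in a suitable weak sense while remaining a genuine Monge–Amp\`ere measure, and pass to the limit using the compactness of Monge–Amp\`ere measures under the $W^{1+s,\frac{2}{s}}$-bound. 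Carrying this limiting argument through cleanly, and checking that the exponent $s\geq\frac23$ (so $\frac1{1-s}\leq 3$ and the dual exponent pairing in \Cref{la:weakjac} closes) is exactly what makes the integration by parts legitimate, is where the real work lies; the rest is an application of Theorem~\ref{th:main}.
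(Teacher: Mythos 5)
Your approach is genuinely different from the paper's and has a gap at its central step. The claim you need --- that for a curl-free $f$ with $\Jac(f)\geq 0$, nonconstancy of $f$ upgrades $\Jac(f)\geq 0$ to $\Jac(f)>0$ in the sense of Definition~\ref{def:jac} --- is false. Take $u(x_1,x_2)=x_1^2$, $f=\nabla u=(2x_1,0)$. Then $\curl(f)=0$ and $\det(Df)\equiv 0$, hence $\Jac(f)[\varphi]=0$ for \emph{every} test function, yet $f$ is nonconstant. The fallback mechanism you propose ("vanishing of the distributional Monge--Amp\`ere measure on an open set forces $u$ to be affine there") fails for the same example: $u=x_1^2$ has identically zero Monge--Amp\`ere measure but is not affine. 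So Theorem~\ref{th:main}, which requires strict positivity, cannot be reached by this route, and no version of the ``degeneracy propagates to constancy'' step survives even in the smooth setting.

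The paper sidesteps the obstruction entirely by \emph{perturbing} $f$ rather than trying to upgrade the sign condition. Set $f_\delta:=f+\delta(-x_2,x_1)^T$. On smooth approximations one computes
\[
\det\bigl(Df_\eps+\delta R\bigr)=\det(Df_\eps)+\delta^2+\delta\bigl(\partial_1 f_\eps^2-\partial_2 f_\eps^1\bigr), \qquad R:=\begin{pmatrix}0 & -1\\ 1 & 0\end{pmatrix},
\]
and the curl-free hypothesis kills the last term in the limit, giving $\Jac(f_\delta)[\varphi]=\Jac(f)[\varphi]+\delta^2\int\varphi$. Since $\Jac(f)\geq 0$, this forces $\varphi\equiv 0$ whenever $\varphi\geq 0$ and $\Jac(f_\delta)[\varphi]=0$; thus $\Jac(f_\delta)>0$ for every $\delta\neq 0$ and Theorem~\ref{th:main} applies to $f_\delta$. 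The decisive final observation is that the modulus-of-continuity bound of Proposition~\ref{pr:osciscontinuity} (fed by Proposition~\ref{pr:diamest}) depends only on the $W^{s,\frac{2}{s}}$-seminorm and is therefore \emph{uniform} in $\delta$, so Arzel\`a--Ascoli lets one pass to the limit $\delta\to 0$ and conclude continuity of $f$ itself. In your counterexample $f=\nabla(x_1^2)$ this is exactly what happens: $f_\delta$ is a shear plus a rotation with strictly positive Jacobian, and $f_\delta\to f$ uniformly. If you wish to retain a potential-theoretic picture, you would at minimum need to replace the false bootstrap step with such a perturbation.
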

Along the way of proving Theorem~\ref{th:main} and Theorem~\ref{th:main2} we obtain degree estimates for maps with signed Jacobian which are of independent interest.

If $f \in W^{s,\frac{n}{s}}(\Omega)$ then for any $x_0 \in \Omega$ we have $f \in W^{s,\frac{n}{s}}(\partial B(x_0,r)) \hookrightarrow C^{0,\frac{s}{n}}(\partial B(x_0,r))$ for almost every $0 < r < \dist(x_0,\partial \Omega)$, by means of Sobolev embedding and Fubini's theorem, Lemma~\ref{la:restriction}. In particular, for any $p \in \R^n \backslash f(\partial B(x,r))$ the degree $\deg(f,B(x,r),p)$ is well-defined as the Brouwer degree of the map $\frac{f-p}{|f-p|}: \partial B_r(x) \to \S^{n-1}$ for almost every $r$, {\it cf.}~ \cite{FG95book}.

We first observe that $f$ with non-negative Jacobian is monotone in the following sense:
\begin{proposition}\label{pr:degest}
Let $f \in W^{s,\frac{n}{s}}(\Omega,\R^n)$ for $\Omega \subset \R^n$ open and $s \geq \frac{n}{n+1}$. Let $B(x,r) \subset B(x,R) \subset \Omega$ and assume that $f$ (in the trace sense) restricted to $\partial B(x,r)$ and $\partial B(x,R)$ is continuous.

If $\Jac(f) \geq 0$ in $\Omega$, then for any $p \not \in (f(\partial B(x,r)) \cup f(\partial B(x,R)))$ we have
\[
 \deg (f,B(x,r),p) \leq \deg (f,B(x,R),p).
\]
\end{proposition}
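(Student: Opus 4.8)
The plan is to reduce the statement to a comparison of Brouwer degrees computed on two spheres, using the fact that the distributional Jacobian $\Jac(f)$ is a non-negative distribution and that (since $s \geq \frac{n}{n+1}$) $f$ itself serves as an admissible test object up to boundary data. First I would fix $p$ outside $f(\partial B(x,r)) \cup f(\partial B(x,R))$; by translating we may assume $p = 0$. Since $f|_{\partial B(x,r)}$ and $f|_{\partial B(x,R)}$ are continuous and avoid $0$, the degrees $\deg(f, B(x,r), 0)$ and $\deg(f, B(x,R),0)$ are well-defined as Brouwer degrees of the normalized maps $f/|f| : \partial B \to \S^{n-1}$. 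The key analytic identity I would invoke is the integral representation of the degree: for a smooth map $g$ with $0 \notin g(\partial B(x,\rho))$,
\[
 \deg(g, B(x,\rho), 0) = \int_{B(x,\rho)} \det(Dg)\, \omega\big(g(y)\big)\, dy
\]
for any $\omega \in C_c^\infty(\R^n \setminus \{0\})$ with $\int_{\R^n} \omega = 1$; more precisely $\det(Dg)\,\omega \circ g$ is a divergence and integrating by parts moves everything to the boundary. The difference of the two sides is therefore governed by $\int_{B(x,R)\setminus B(x,r)} \det(Dg)\, \omega \circ g$.

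The heart of the argument is to choose $\omega$ so that $\varphi := \omega \circ f$ (extended appropriately) is a non-negative admissible test function for $\Jac(f)$ on the annulus, and then pass to the limit along smooth approximations $f_k \to f$ in $W^{s,n/s}$. I would pick a smooth cutoff $\chi$ supported in the open annular region $B(x,R) \setminus \overline{B(x,r)}$ — or rather work on the domain $B(x,R)$ and subtract off the contribution from $B(x,r)$ — and use a normalized $\omega \geq 0$ concentrated near a point $q \notin f(\partial B(x,r)) \cup f(\partial B(x,R))$ with $\deg$ constant near $q$. Composing, $\omega \circ f_k \geq 0$ and, by the chain/composition estimates for fractional Sobolev spaces (Lemma~\ref{la:weakjac} is precisely set up so that such compositions lie in the dual space $W^{(1-s)n, 1/(1-s)}_0$ when $s \geq \frac{n}{n+1}$, in which case this space contains $W^{s,n/s}_0$), $\omega \circ f_k$ converges to $\omega \circ f$ in the space pairing against $\det(Df_k)$. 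Thus
\[
 \deg(f, B(x,R), 0) - \deg(f, B(x,r), 0) = \Jac(f)[\varphi]
\]
for a suitable $\varphi \geq 0$ supported in the closed annulus, and the right-hand side is $\geq 0$ by hypothesis.

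The main obstacle I anticipate is the bookkeeping of \emph{boundary terms} and the justification that the annular test function $\varphi = \omega \circ f$ is genuinely in $W^{s,n/s}_0$ of a neighborhood inside $\Omega$ (so that it pairs correctly with $\Jac(f)$) while still representing the difference of degrees — in particular handling the fact that $f$ is only continuous, not smooth, on the two bounding spheres, so the classical degree-as-integral formula must be applied to the approximants $f_k$ and the convergence $\deg(f_k, B, 0) \to \deg(f, B, 0)$ justified via the boundary convergence $f_k|_{\partial B} \to f|_{\partial B}$ (uniformly, after selecting good radii as in Lemma~\ref{la:restriction}). A secondary technical point is choosing the approximation so that the degrees on \emph{both} spheres stabilize simultaneously; this is where the a.e.-in-$r$ trace continuity and a Fubini argument over the radial variable are used to pick radii $r, R$ at which everything is well-behaved, and then a density/continuity argument extends the inequality to the stated $r, R$. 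Once these approximation issues are settled, the sign of $\Jac(f)[\varphi]$ immediately gives the claimed monotonicity of the degree.
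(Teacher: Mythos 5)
Your plan is essentially the same as the paper's: reduce the degree difference to a pairing $\Jac(f)[\varphi]$ where $\varphi$ is a non-negative Lipschitz function of $f$ that vanishes near both bounding spheres, and then invoke the sign hypothesis. The paper implements this by pulling back the volume form of $\S^{n-1}$ through a radially modified map $\psi_\eps = (f_\eps-p)\,d(|f_\eps-p|)$ (with $d$ from Lemma~\ref{la:weirdode1}), which produces the test function $\det(W(f))$; you instead invoke the Kronecker/Heinz integral formula $\deg(g,B,p) = \int_B \det(Dg)\,\omega(g)$ with a smooth bump $\omega$. These are two ways of writing the same identity — indeed $\det(W(v))$ is, up to a normalizing constant, exactly such a bump supported in $\{|v-p|<c/2\}$ — so the argument ends up at the same place, and the convergence $\omega\circ f_\eps \to \omega\circ f$ in $W^{s,n/s}_0$ is the same step the paper needs for $\det(W(f_\eps))$.

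One point you should tighten: the degree formula does not hold for \emph{any} $\omega\in C_c^\infty(\R^n\setminus\{0\})$ with $\int\omega=1$; you need $\supp\omega$ contained in the connected component of $\R^n\setminus g(\partial B)$ containing $p$, and — since the identity must be used simultaneously on $\partial B(x,r)$ and $\partial B(x,R)$ — you must take $\supp\omega \subset B\bigl(p, \tfrac{c}{2}\bigr)$ with $c := \min\{\dist(f(\partial B(x,r)),p),\ \dist(f(\partial B(x,R)),p)\}$ (and in particular $\omega$ concentrated near $p$, not near some unrelated point $q$). Once that support condition is in place, $\omega\circ f_\eps$ vanishes near $\partial(B(x,R)\setminus B(x,r))$, extends by zero to a $W^{s,n/s}_0(\Omega)$ function, and the argument closes exactly as in the paper. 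The remark that "$\det(Dg)\,\omega\circ g$ is a divergence and Stokes moves everything to the boundary" is more delicate than it sounds (the primitive $(n-1)$-form of $\omega\,dy$ on $\R^n\setminus\{0\}$ is not compactly supported), so it is cleaner to cite the Kronecker formula directly rather than derive it by an integration by parts.
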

We also have
\begin{proposition}\label{pr:degest2}
Let $f \in W^{s,\frac{n}{s}}(\Omega,\R^n)$ for $\Omega \subset \R^n$ open and $s \geq \frac{n}{n+1}$. Let $B(x,R) \subset \Omega$ and assume that $f$ is continuous on $\partial B(x,R)$.

If $\Jac(f) \geq 0$, then for any $p \not \in f(\partial B(x,R))$ we have
\[
 \deg (f,B(x,R),p) \geq 0.
\]
\end{proposition}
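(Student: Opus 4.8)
The plan is to deduce this from Proposition~\ref{pr:degest} by a limiting argument, shrinking the radius to zero. First I would observe that for almost every small $\rho \in (0,R)$ the restriction $f|_{\partial B(x,\rho)}$ is continuous (indeed H\"older, by the trace/restriction argument from Lemma~\ref{la:restriction} invoked just before the proposition), so $\deg(f,B(x,\rho),p)$ is well-defined for a.e.\ such $\rho$, and by Proposition~\ref{pr:degest} applied to the pair $B(x,\rho) \subset B(x,R)$ we get
\[
\deg(f,B(x,\rho),p) \leq \deg(f,B(x,R),p)
\]
for a.e.\ $\rho \in (0,R)$, provided $p \notin f(\partial B(x,\rho))$. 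So it suffices to show that the left-hand side is $\geq 0$ for a suitable sequence $\rho = \rho_j \downarrow 0$. (Here I am using that the conclusion of Proposition~\ref{pr:degest} is in the direction that lets me push $\rho$ to $0$; note the monotonicity goes the convenient way.)

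The key step is then: for a.e.\ small $\rho$, $\deg(f,B(x,\rho),p) \geq 0$. The idea is that for $\rho$ small, $f(\partial B(x,\rho))$ is a "small" set — more precisely, since $f \in C^{0,s/n}(\partial B(x,\rho))$ with a semi-norm bound that is uniform (or at least bounded along a good sequence of radii) coming from the $W^{s,n/s}$ energy of $f$ on the annulus, and since $f|_{\partial B(x,\rho)}$ has small $L^\infty$-oscillation along a subsequence $\rho_j \to 0$ (this is where one uses absolute continuity of the integral $\int\int \frac{|f(y)-f(z)|^{n/s}}{|y-z|^{\ldots}}$ on shrinking sets, together with Fubini to pick good radii), the image $f(\partial B(x,\rho_j))$ is contained in a small ball $B(q_j,\eta_j)$ with $\eta_j \to 0$. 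If $p$ lies outside this small ball, then $p$ is in the unbounded component of $\R^n \setminus f(\partial B(x,\rho_j))$, hence $\deg(f,B(x,\rho_j),p) = 0 \geq 0$; if $p$ lies inside $B(q_j,\eta_j)$, I instead use that $\Jac(f)\ge 0$ directly: choosing a test function adapted to $B(x,\rho_j)$ and recalling the Kronecker-type integral formula expressing the degree as an average of $\Jac(f)$ against a bump (which is exactly the content encoded in Lemma~\ref{la:weakjac} and the distributional Jacobian), the sign $\Jac(f) \geq 0$ forces the degree to be non-negative. In fact, the cleanest route may be to bypass the case split: for a.e.\ $\rho$, the degree $\deg(f,B(x,\rho),\cdot)$ is constant on each connected component of the complement of $f(\partial B(x,\rho))$ and equals $0$ on the unbounded one, and the non-negativity of $\Jac(f)$ tested against (an approximation of) $\chi_{B(x,\rho)}$ shows the total signed count $\int \deg(f,B(x,\rho),p)\,dp = \Jac(f)[\text{bump}] \geq 0$; combined with monotonicity in $\rho$ (Proposition~\ref{pr:degest}) which prevents the positive and negative parts from coexisting, one concludes $\deg \geq 0$ everywhere off the image.

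Assembling: pick a sequence $\rho_j \downarrow 0$ of "good" radii (continuity of the trace, validity of the degree formula, $p \notin f(\partial B(x,\rho_j))$, and smallness of $\osc f$ on $\partial B(x,\rho_j)$), show $\deg(f,B(x,\rho_j),p) \geq 0$ by the argument above, and then invoke Proposition~\ref{pr:degest} to get $\deg(f,B(x,R),p) \geq \deg(f,B(x,\rho_j),p) \geq 0$. The main obstacle I anticipate is the rigorous justification of the Kronecker-integral / degree-as-distributional-Jacobian identity at the fractional regularity $W^{s,n/s}$, i.e.\ matching the classical formula $\deg(f,B,p) = \int_B \det(Df)\,\varphi$ (for appropriate $\varphi$) with the distributional pairing $\Jac(f)[\varphi]$ defined via the smooth approximations in Lemma~\ref{la:weakjac} — this requires showing the approximation $f_k \to f$ can be taken so that the degrees also converge, which in turn rests on the uniform H\"older control on the relevant spheres. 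Everything else (choice of good radii via Fubini, absolute continuity of the Gagliardo energy, constancy of degree on components) is routine.
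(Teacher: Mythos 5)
Your proposal takes a genuinely different route from the paper. The paper proves Proposition~\ref{pr:degest2} directly as the ``$r=0$'' instance of the argument for Proposition~\ref{pr:degest}: one builds $\psi_\eps = (f_\eps - p)\,d(|f_\eps - p|)$ using the function $d$ of Lemma~\ref{la:weirdode1}, applies Stokes on the full ball $B(R)$ (only one boundary component now), and obtains $\deg(f,B(R),p) = \Jac(f)[\det(W(f))]$ with $\det(W(f)) \geq 0$ compactly supported inside $B(R)$; the sign condition $\Jac(f) \geq 0$ then gives the claim in one stroke. You instead try to reduce to \ref{pr:degest} by shrinking the inner radius to zero, which is structurally different.

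The gap in your proposal is the case ``$p$ inside $B(q_j,\eta_j)$,'' and neither of the two resolutions you sketch closes it. The ``Kronecker-type integral formula expressing the degree as an average of $\Jac(f)$ against a bump'' \emph{is} the hard part: for $W^{s,\frac{n}{s}}$-maps, the passage from the Brouwer degree on a sphere to a test-function pairing with the distributional Jacobian requires exactly the careful construction of $\psi_\eps$ via $d$ from Lemma~\ref{la:weirdode1}. If you could do it for $B(\rho_j)$, you could do it just as well for $B(R)$, which would make the entire shrinking step and case split superfluous --- so in that case your proof reduces to the paper's direct proof, except that you leave its real content as an unexamined black box. Your ``cleanest route'' is also blocked: you propose to test $\Jac(f)$ against $\chi_{B(x,\rho)}$ (or an approximation thereof), but $\chi_B$ is \emph{not} in $W^{s,\frac{n}{s}}$ (indeed $[\chi_B]^{n/s}_{W^{s,n/s}} \approx \int_B \dist(x,\partial B)^{-n}\,dx = \infty$ since $sp = n \geq 2 > 1$), so the distributional Jacobian of Definition~\ref{def:jac} cannot be applied to it; and the subsequent claim ``monotonicity in $\rho$ prevents the positive and negative parts from coexisting'' is asserted but not explained.

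The shrinking route is salvageable without ever re-deriving the Kronecker formula, by a perturbation-of-$p$ argument you did not make: $\deg(f,B(R),\cdot)$ is locally constant on $\R^n \setminus f(\partial B(R))$, so there is $\delta>0$ with $\deg(f,B(R),p') = \deg(f,B(R),p)$ for all $p' \in B(p,\delta)$. Pick good radii $\rho_j \to 0$ with $f(\partial B(\rho_j)) \subset B(q_j,\eta_j)$, $\eta_j \to 0$. Choose $j$ so large that $2\eta_j < \delta$ and then $p'$ with $2\eta_j < |p'-p| < \delta$; since $|p - q_j|<\eta_j$, this forces $p' \notin B(q_j,\eta_j)$, hence $p' \notin f(\partial B(\rho_j))$ and $\deg(f,B(\rho_j),p') = 0$. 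Proposition~\ref{pr:degest} then gives $\deg(f,B(R),p) = \deg(f,B(R),p') \geq \deg(f,B(\rho_j),p') = 0$, with no case on $p$ at all. As written, though, your proposal does not contain this step, and the remaining content is either circular or relies on a test function that the fractional Jacobian does not accept.
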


Next, we obtain that if $f$ is continuous and the Jacobian of $f$ is positive then $f$ is sense-preserving:
\begin{proposition}\label{pr:degeststrict}
Let $f \in W^{s,\frac{n}{s}}\cap C^0(\Omega,\R^n)$, $\Omega \subset \R^n$ open, $s \geq \frac{n}{n+1}$.

If $\Jac(f) > 0$ in $\Omega$ then for any ball $B(r) \subset \Omega$ if $p \in f(B(r)) \backslash f(\partial B(r))$ then $\deg(f,B(r),p) \geq 1$.
\end{proposition}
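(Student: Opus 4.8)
The plan is to deduce the strict bound $\deg(f,B(r),p)\geq 1$ from the strict positivity of $\Jac(f)$ via the degree--Jacobian formula, which is essentially the mechanism already behind \Cref{pr:degest2}. Fix a ball $B(r)\subset\Omega$ and a point $p\in f(B(r))\setminus f(\partial B(r))$, say $p=f(x_0)$ with $x_0\in B(r)$, and let $U$ be the connected component of $\R^n\setminus f(\partial B(r))$ containing $p$. I would pick $g\in C_c^\infty(U)$ with $g\geq 0$, $\int_{\R^n}g=1$, and $g>0$ on a small ball centred at $p$.

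The first point to check is that $g\circ f$ is an admissible test function, i.e.
\[
g\circ f\in W_0^{s,\frac{n}{s}}(B(r))\hookrightarrow W_0^{(1-s)n,\frac{1}{1-s}}(B(r)).
\]
Here the embedding is the critical Sobolev embedding on a bounded domain: both spaces have the same critical scaling (Sobolev number $0$), and the hypothesis $s\geq\frac{n}{n+1}$ is exactly what makes $s\geq(1-s)n$, so the left-hand space has the higher order. Membership in $W^{s,\frac{n}{s}}(B(r))$ follows from the Lipschitz bound $[g\circ f]_{W^{s,\frac{n}{s}}(B(r))}\leq\mathrm{Lip}(g)\,[f]_{W^{s,\frac{n}{s}}(B(r))}$ together with $g\circ f\in L^\infty\subset L^{\frac{n}{s}}$, while the vanishing at the boundary comes from the fact that $\supp g$ has positive distance to $f(\partial B(r))$, so uniform continuity of $f$ on $\overline{B(r)}$ forces $g\circ f\equiv 0$ in a neighbourhood of $\partial B(r)$. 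Thus $g\circ f$, extended by zero, is a nonnegative test function for $\Jac(f)$ in the sense of \Cref{def:jac}.

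The core step is the identity $\deg(f,B(r),p)=\Jac(f)[\,g\circ f\,]$, which I would establish by approximation. Mollifying $f$ to smooth maps $f_k$ that converge to $f$ both uniformly on $\overline{B(r)}$ and in $W^{s,\frac{n}{s}}$, one has for $k$ large the classical smooth degree formula $\deg(f_k,B(r),p)=\int_{B(r)}\det(Df_k)\,(g\circ f_k)$, valid because $\int g=1$ and $\supp g$ lies in the component of $p$ in $\R^n\setminus f_k(\partial B(r))$; the left-hand side equals $\deg(f,B(r),p)$ by stability of the Brouwer degree under uniform perturbations (using $p\notin f(\partial B(r))$), and the right-hand side converges to $\Jac(f)[g\circ f]$ by \Cref{la:weakjac}. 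Granting this identity, the proof closes immediately: $g\circ f\geq 0$ a.e.\ and, since $f$ is continuous with $f(x_0)=p$ and $g>0$ near $p$, also $g\circ f>0$ on a neighbourhood of $x_0$, so $g\circ f\not\equiv 0$; hence $\Jac(f)>0$ gives $\Jac(f)[g\circ f]>0$, so $\deg(f,B(r),p)>0$, and being an integer it is $\geq 1$.

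The one genuine obstacle is the convergence $\int_{B(r)}\det(Df_k)\,(g\circ f_k)\to\Jac(f)[g\circ f]$ in the core step: one must verify that $g\circ f_k$ is a legitimate approximating sequence for $g\circ f$ in $W_0^{(1-s)n,\frac{1}{1-s}}(B(r))$, equivalently that $g\circ f_k-(g\circ f)_k\to 0$ there, which is where the threshold $s\geq\frac{n}{n+1}$ (at which $f$ tests its own Jacobian) genuinely enters — via interpolating the uniform bound on $f_k-f$ against the $W^{s,\frac{n}{s}}$ bound — and which I expect is the very estimate already carried out on the way to \Cref{pr:degest2}. Everything else is soft degree theory together with the bookkeeping built into \Cref{la:weakjac}.
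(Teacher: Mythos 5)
Your proof is correct in substance, but it takes a genuinely different route from the paper's. The paper reuses the same machinery as in \Cref{pr:degest}: it writes $\deg(f,B(r),p)=\Jac(f)[\det(W(f))]$ with the matrix $W(v)=d(|v|)I+\frac{d'(|v|)}{|v|}v\otimes v$ built from the function $d$ of \Cref{la:weirdode1} (taking $c=\tfrac12\dist(f(\partial B(r)),p)$), observes $\det(W(f))\geq 0$, and then argues by contradiction: if $\deg=0$, then $\Jac(f)>0$ forces $\det(W(f))\equiv 0$, which by the explicit eigenvalue computation forces $d(|f-p|)+d'(|f-p|)|f-p|\equiv 0$, hence $|f-p|$ is bounded below on $B(r)$, contradicting $p\in f(B(r))$. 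You instead test with a target-side bump $g\circ f$, use the classical integral representation $\deg(f,B(r),p)=\int\det(Df)(g\circ f)$ with $\operatorname{supp} g$ inside the component of $p$, and conclude directly from $g\circ f\geq 0$, $g\circ f\not\equiv 0$, and strict positivity of $\Jac(f)$ that the degree is strictly positive. The paper's choice is natural because it keeps the same $W$-construction consistently across \Cref{pr:degest}, \Cref{pr:degest2}, \Cref{pr:degeststrict} and \Cref{pr:diamest}; your choice is more self-contained and closer to the textbook degree formula, and avoids the contradiction step. Both approaches share the same hidden technical point, which you correctly flagged: one must show that the composed test functions ($g\circ f_k$ for you, $\det(W(f_\eps))$ for the paper) converge in $W^{(1-s)n,\frac{1}{1-s}}$ to make the approximating sequence admissible for \Cref{la:weakjac}. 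The paper asserts strong $W^{s,\frac{n}{s}}$-convergence of the Lipschitz composition without proof, and your interpolation argument between $L^\infty$ and $W^{s,\frac{n}{s}}$ gives the needed (weaker) convergence for $s>\frac{n}{n+1}$; at the endpoint $s=\frac{n}{n+1}$ the two spaces coincide and a continuity-of-composition argument is needed in both proofs, so you are no worse off than the paper.
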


If the Jacobian is positive, the image of a ball $f(B(r))$ has an essential diameter comparable to the diameter of $f(\partial B(r))$. This will be the main ingredient towards the proof of Theorem~\ref{th:main}.
\begin{proposition}\label{pr:diamest}
There exists some $\Lambda > 0$  depending only on the dimension such that the following holds. Let $f \in W^{s,\frac{n}{s}}(\Omega,\R^n)$, $\Omega \subset \R^n$ open, and $s \geq \frac{n}{n+1}$. Assume that $\Jac(f) > 0$ in $\Omega$. For any $B(r) \subset \subset \Omega$ such that $f\Big |_{\partial B(r)}$ is continuous, we can find a ball $B(q,R) \subset \R^n$ with 
\[
 R \leq \Lambda\ \diam(f(\partial B(r))),
\]
and 
\[
 \{x \in B(r): f(x) \not \in B(q,R)\} \quad \text{is a null set}. 
\]
The number $2R$ may be viewed as the ``essential diameter'' of $f(B(r))$.
\end{proposition}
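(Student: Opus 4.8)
The plan is to use the degree theory developed in the preceding propositions to control the image $f(B(r))$ by the geometry of $f(\partial B(r))$. First I would set $\gamma := f|_{\partial B(r)}$, which is a continuous map of the sphere, and let $K := f(\partial B(r)) = \gamma(\partial B(r))$, a compact connected set. Pick any point $q \in K$ and set $R := \Lambda\,\diam(K)$ for a dimensional constant $\Lambda$ to be chosen; the candidate ball is $B(q,R)$. The claim is then equivalent to saying that for a.e. point $p \in \R^n \setminus \overline{B(q,R)}$ we have $p \notin f(B(r))$ up to a null set, which by the standard area-formula/degree characterization (as in \cite{FG95book}) reduces to showing $\deg(f,B(r),p) = 0$ for all such $p$ lying outside $K = f(\partial B(r))$.

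The key step is the following: the unbounded component $U_\infty$ of $\R^n \setminus K$ lies, outside a large ball, entirely in a single connected component of $\R^n \setminus K$, and on that component the degree $\deg(f,B(r),\cdot)$ is constant. Since for $|p|$ large (larger than $\sup_{\partial B(r)}|f|$, which is finite as $f|_{\partial B(r)}$ is continuous) the map $\frac{f-p}{|f-p|}$ restricted to $\partial B(r)$ is homotopic to a constant, we get $\deg(f,B(r),p)=0$ there, hence $\deg(f,B(r),p)=0$ on all of $U_\infty$. Now I must place $\overline{B(q,R)}$ so that its complement is contained in $U_\infty$: since $K$ is connected and contains $q$, it is contained in $\overline{B(q,\diam K)}$, so every point at distance $> \diam K$ from $q$ lies in the unbounded component $U_\infty$ of $\R^n \setminus K$. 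Thus taking $\Lambda = 1$ (or any $\Lambda \geq 1$) already gives: $p \notin \overline{B(q,R)} \implies p \in U_\infty \implies \deg(f,B(r),p) = 0$. By Proposition~\ref{pr:degest2} the degree is $\geq 0$, and by the area formula the set $\{x \in B(r): f(x) = p,\ Df(x) \text{ exists and invertible}\}$ contributes nonnegatively; more precisely one invokes that $\int_{B(r)} \det(Df)\,\chi_{f^{-1}(V)} = \int_V \deg(f,B(r),p)\,dp$ for open $V \subset \R^n \setminus f(\partial B(r))$, which here vanishes since $\deg \equiv 0$ on $V = \R^n \setminus \overline{B(q,R)}$, forcing $|\{x \in B(r): f(x) \notin \overline{B(q,R)}\}| = 0$ because $\Jac(f) > 0$ means $\det(Df) > 0$ a.e.\ in the a.e.-differentiability sense.

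The main obstacle is the last measure-theoretic step: for $f \in W^{s,n/s}$ with $s<1$ there is no pointwise derivative, so the ``area formula'' and the identity relating $\int \det(Df)$ over sublevel sets to $\int \deg$ must be interpreted through the distributional Jacobian of Lemma~\ref{la:weakjac} and Definition~\ref{def:jac}. The resolution is to approximate: take $f_k \in C^\infty(\overline\Omega)$ with $f_k \to f$ in $W^{s,n/s}$ and (by Lemma~\ref{la:restriction} and Fubini) $f_k|_{\partial B(r)} \to f|_{\partial B(r)}$ uniformly for a.e.\ $r$, so $\deg(f_k,B(r),p) \to \deg(f,B(r),p)$ for $p \notin f(\partial B(r))$; run the smooth area formula for each $f_k$, test against $\varphi = \chi_V$ smoothed out (legitimate since $\chi_V \in W^{(1-s)n,1/(1-s)}_0$ can be approximated, using $s \geq \frac{n}{n+1}$ so that the dual exponent is admissible), and pass to the limit using Lemma~\ref{la:weakjac} together with the definition $\Jac(f)[\varphi] > 0$ for $\varphi \geq 0$, $\varphi \not\equiv 0$. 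If $V \cap f(B(r))$ had positive measure in the essential sense, one could build such a nonzero $\varphi$ supported where $\deg(f,B(r),\cdot)=0$ and derive $\Jac(f)[\varphi] = 0$, contradicting positivity. This is where the threshold $s \geq \frac{n}{n+1}$ is genuinely used, and where care with the density of smooth functions and with sets of measure zero on spheres is required.
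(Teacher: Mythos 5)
There is a genuine gap in the final measure-theoretic step, and it is exactly the one you flag as ``the main obstacle''; the proposed fix does not resolve it. To pass from $\deg(f,B(r),\cdot)\equiv 0$ on $V:=\R^n\setminus \overline{B(q,R)}$ to the conclusion $\left|\{x\in B(r):f(x)\in V\}\right|=0$, you would need to test the distributional Jacobian against $\chi_{f^{-1}(V)}=\chi_V\circ f$, i.e.\ against the indicator of a sublevel/superlevel set of $|f-q|$. But \Cref{def:jac} only allows test functions in $W^{s,\frac{n}{s}}_0(\Omega)$, and $\chi_V\circ f$ is not such a function: $\chi_V$ is a jump, and composing a non-Lipschitz function with $f\in W^{s,\frac{n}{s}}$ does not produce a $W^{s,\frac{n}{s}}$-function. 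Your suggestion to ``smooth out $\chi_V$'' conflates the target space (where $V$ lives) with the domain $\Omega$ (where $\varphi$ must live); a mollification of $\chi_V$ on the target still has to be post-composed with $f$, and for that composition to stay in $W^{s,\frac{n}{s}}_0(\Omega)$ the outer function must be Lipschitz and must equal zero on a neighborhood of $f(\partial B(r))$. Similarly, the intermediate step $\int_{B(r)}\det(Df)\chi_{f^{-1}(V)}=\int_V\deg(f,B(r),p)\,dp$ (the degree/area formula) has no established meaning here because there is no pointwise $\det(Df)$, and the approximation $f_k\to f$ by smooth maps gives convergence of $\int_{B(r)}\det(Df_k)\,\varphi$ only when $\varphi$ lies in the dual Sobolev class --- not when $\varphi=\chi_{f_k^{-1}(V)}$, which in addition depends on $k$.

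The paper sidesteps both difficulties and in fact uses no degree theory at all in this proof. It replaces the indicator $\chi_{\{|v-q|>R\}}$ by a $C^{1,1}$ radial cutoff $\pi$ (Lemma~\ref{la:weirdode2}) and tests $\Jac(f)$ against $\varphi := 1-\det\bigl(W(f-q)\bigr)$ where $W(v)=\pi(|v|)I+\frac{\pi'(|v|)}{|v|}v\otimes v$. This $\varphi$ is a Lipschitz function of $f$, hence in $W^{s,\frac{n}{s}}$; it vanishes identically near $\partial B(r)$ (since $\pi\equiv 1$ there), so by \Cref{la:glue} it extends by zero to $W^{s,\frac{n}{s}}_0(\Omega)$; and it is nonnegative by construction. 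The identity $\Jac(f)[\varphi]=0$ follows not from a degree formula but from the elementary fact that $\int_{B(r)}\det(Dg_\eps)$ depends only on $g_\eps|_{\partial B(r)}$, applied to the radially truncated map $g_\eps=(f_\eps-q)\pi(|f_\eps-q|)+q$. Then strict positivity of $\Jac(f)$ forces $\varphi\equiv 0$, which unwinds (via the properties of $\pi$) to $|f-q|<2\lambda$ a.e. Your choice $\Lambda=1$ versus the paper's $\Lambda=20$ reflects that the Lipschitz cutoff needs room to interpolate between $1$ and its decaying tail; that is a harmless price for having an admissible test function. If you want to salvage the degree-based route, you would need to independently establish a change-of-variables formula for $W^{s,\frac{n}{s}}$-maps with distributional Jacobian of a sign, which is considerably more than is available from \Cref{la:weakjac} and \Cref{pr:degest2} alone.
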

The remainder of this paper is organized as follows. 
In Section~\ref{s:fracsobest} we refer to some needed results for Sobolev spaces.
In Section~\ref{s:signedjac} we prove the degree estimates for maps with signed Jacobians, namely Propositions~\ref{pr:degest}, \ref{pr:degest2}, \ref{pr:degeststrict}, \ref{pr:diamest}.
In Section~\ref{s:mainproof} we prove Theorem~\ref{th:main} and Theorem~\ref{th:main2}.

\textbf{Acknowledgments.}
This work has been done during SL's stay as a CRM--ISM postdoctoral fellow at Centre de Recherches Math\'{e}matiques, Universit\'{e} de Montr\'{e}al and Institut des Sciences Math\'{e}matiques. SL thanks these institutes for their hospitality. AS acknowledges funding by the Simons foundation, grant no 579261.

The authors would like to thank P. Haj\l{}asz for helpful discussions; in particular he told us about \Cref{ex:counter}.
\section{Fractional Sobolev spaces}\label{s:fracsobest}
Lemma~\ref{la:weakjac} was (essentially) proven in \cite{WY99a} as an extension of the ground-breaking paper \cite{CLMS}, which showed that Jacobians of $W^{1,n}$-maps can be tested with BMO-maps. The proof in \cite{WY99a} uses Littlewood-Paley theory and paraproducts. In \cite{Brezis-Nguyen-2011} Brezis and Nguyen gave a simpler and more elegant proof of this result for $s =\frac{n}{n+1}$. We present here the following slight adaptation of their argument due to \cite{LS18}.

We restrict our attention to the {\em a priori} estimates, from which the claim follows easily due to multi-linearity.
\begin{proof}[Proof of \Cref{la:weakjac} (a priori estimates)]
Let $\varphi \in C_c^\infty(\Omega)$ and $f \in C^\infty(\overline{\Omega})$. $\Omega$ is an extension domain, \cite{JW78,Z15}, so we may assume that $f \in W^{s,\frac{n}{s}}(\R^n) \cap C^1(\R^n)$. Then
\[
 \int_{\Omega} \det(Df) \varphi = \int_{\R^n} \det(Df) \varphi. 
\]
Extend $f$ and $\varphi$ harmonically to $\R^{n+1}_+$, say to $F$ and $\Phi$ respectively. We write $(x,t) \in \R^n \times \R_+ = \R^{n+1}_+$. By Stokes' theorem and H\"older's inequality,
\begin{align*}
 \left |\int_{\R^n} \det(Df) \varphi \right |&= \left |\int_{\R^{n+1}_+} \det(DF|D\Phi) \right |\\&\leq \brac{\int_{\R^{n+1}_+} |t^{1-\frac{s}{n}-s} DF|^{\frac{n}{s}}}^{s} \brac{\int_{\R^{n+1}_+} |t^{1-(1-s)-(1-s)n}D\Phi|^{\frac{1}{1-s}}}^{1-s}.
\end{align*}
If $s \in (\frac{n-1}{n},1]$,  then $(1-s)n \in (0,1)$. Then, by trace estimates, see {\it e.g.} \cite[Proposition~10.2]{LS18},  we have 
\[
 \brac{\int_{\R^{n+1}_+} |t^{1-\frac{s}{n}-s} DF|^{\frac{n}{s}}}^{s} \aeq [f]_{W^{s,\frac{n}{s}}(\Omega)}^n
\]
and
\[
 \brac{\int_{\R^{n+1}_+} |t^{1-(1-s)-(1-s)n}D\Phi|^{\frac{1}{1-s}}}^{1-s} \aeq [\varphi]_{W^{(1-s)n,1-s}(\Omega)}.
\]
Here we also used the fact that $[f]_{W^{s,\frac{n}{s}}(\R^n)} \aleq [f]_{W^{s,\frac{n}{s}}(\Omega)}$. This is because $\Omega$ is an extension domain; see \cite{JW78,Z15}.
We conclude, because we have shown
\[
 \int_{\Omega} \det(Df) \varphi  \aleq [f]_{W^{s,\frac{n}{s}}(\Omega)}^n\, [\varphi]_{W^{(1-s)n,1-s}(\Omega)}.
\]
\end{proof}
The ensuing result on trace operators will be useful for the subsequent developments. For detailed treatments we refer to  \cite[\textsection 2.4.2, Theorem 1]{RS96}, \cite[Theorem 7.43, Remark 7.45]{AF03} and  \cite[Lemma 36.1]{T07}.
\begin{lemma}[Trace Theorem]\label{la:trace}
Let $\Omega \subset \R^n$ be either bounded or the complement of a bounded set, with smooth boundary. If $s \in (0,1)$, $p \in (1,\infty)$ with $s-\frac{1}{p} > 0$, then the trace operator on $T = \Big |_{\partial \Omega}$ is a bounded, linear, surjective operator from $W^{s,p}(\Omega)$ to $W^{s-\frac{1}{p},p}(\partial \Omega)$. The harmonic extension is a bounded linear right-inverse of $T$.
\end{lemma}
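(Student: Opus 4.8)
\textbf{Plan for the Trace Theorem (Lemma~\ref{la:trace}).}
The statement is classical, and the plan is to localize to a half-space, reduce to the model trace map on $\R^{n-1} = \partial \R^n_+$, and then identify the trace space via the real interpolation / retraction-coretraction machinery. First I would invoke a smooth partition of unity subordinate to a finite atlas of boundary charts straightening $\partial \Omega$; since $\partial \Omega$ is smooth and compact (either $\Omega$ is bounded or its complement is), each chart is a $C^\infty$-diffeomorphism carrying a neighborhood of a boundary point to a half-ball in $\R^n_+$, and $W^{s,p}$ with $s \in (0,1)$, $p \in (1,\infty)$ is invariant under such diffeomorphisms (with comparable norms). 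This reduces the boundedness and surjectivity claims to the single model statement: the restriction map $u \mapsto u(\cdot,0)$ is bounded and surjective from $W^{s,p}(\R^n_+)$ onto $W^{s-\frac1p,p}(\R^{n-1})$, together with the assertion that the harmonic (more precisely, Poisson) extension is a bounded right inverse.

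For the boundedness direction in the model case, I would use the characterization of $W^{s,p}(\R^n_+)$ by the Gagliardo seminorm and a direct Fubini-type estimate: writing $x = (x',x_n)$, one controls $[u(\cdot,0)]_{W^{s-1/p,p}(\R^{n-1})}^p = \int\int \frac{|u(x',0)-u(y',0)|^p}{|x'-y'|^{n-1+(s-1/p)p}}\,dx'\,dy'$ by splitting $u(x',0)-u(y',0)$ through an intermediate average over a ball of radius $|x'-y'|$ in the $x_n$-direction, and then dominating each resulting piece by the full $W^{s,p}(\R^n_+)$-seminorm after integrating in the normal variable; the condition $s - \frac1p > 0$ is exactly what makes the arising one-dimensional integrals in $x_n$ converge. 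For surjectivity and the right-inverse claim, I would take the extension to be given by a smooth averaging kernel (or the Poisson kernel), $Eg(x',x_n) = \int_{\R^{n-1}} \eta(z)\, g(x' - x_n z)\,dz$ with $\eta \in C_c^\infty$, $\int \eta = 1$, and verify by a standard dilation/Minkowski-inequality computation that $\|Eg\|_{W^{s,p}(\R^n_+)} \lesssim \|g\|_{W^{s-1/p,p}(\R^{n-1})}$ and that its trace on $x_n = 0$ returns $g$; this simultaneously yields surjectivity and exhibits a bounded linear right inverse, from which one concludes that the harmonic extension (a specific such extension, by Lemma on harmonic extensions controlling the weighted-gradient norm as already used in the proof of \Cref{la:weakjac}) is likewise bounded.

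The main technical obstacle is the bookkeeping in the reduction to the model problem: one must check that the Gagliardo seminorm transforms correctly under the chart diffeomorphisms and that the partition-of-unity cutoffs do not destroy the fractional smoothness — i.e. that multiplication by a fixed $C_c^\infty$ function is bounded on $W^{s,p}$ and on $W^{s-1/p,p}(\partial\Omega)$ — and that gluing the local extensions back together still produces a global bounded extension with the correct trace. All of these are routine but somewhat lengthy; since the result is entirely standard I would not reproduce the computations, but rather cite \cite[\textsection 2.4.2, Theorem 1]{RS96}, \cite[Theorem 7.43, Remark 7.45]{AF03}, and \cite[Lemma 36.1]{T07} for the full details, indicating only the above outline. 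The one point worth stating explicitly for later use is that the harmonic extension is among the valid bounded right inverses, which is what connects this lemma to the Stokes-theorem argument in \Cref{s:fracsobest}.
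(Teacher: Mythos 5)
The paper gives no proof of this lemma at all—it simply cites \cite[\textsection 2.4.2, Theorem 1]{RS96}, \cite[Theorem 7.43, Remark 7.45]{AF03}, \cite[Lemma 36.1]{T07}—and your outline is the standard localization-to-half-space argument underlying exactly those references, so you are consistent with the paper's treatment. The only step stated a bit loosely is the final inference that the \emph{harmonic} extension is bounded (boundedness of one right inverse $E$ does not transfer to another), but your parenthetical appeal to the weighted-gradient characterization of $[\cdot]_{W^{s,p}}$ via harmonic extension, as used in the proof of \Cref{la:weakjac}, is the correct way to close that.
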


The following is well-known for Sobolev functions in $W^{1,p}$ (it is essentially Fubini's theorem):
\begin{lemma}[Restriction theorem]\label{la:restriction}
For $\Omega$ a smooth, bounded domain let $f \in W^{s,p}(\Omega)$. Fix $x_0 \in \Omega$. There exists a representative of $f$ such that for $\mathcal{L}^1$-almost every $r \in (0,\dist(x_0,\partial \Omega))$ we have $f \in W^{s,p}(\partial B(x_0, r))$.

Moreover, for $\Omega = B(x_0,R)$ we have 
\[
 \brac{\int_0^R [f]_{W^{s,p}(\partial B(x_0,r))}^{p} dr}^{\frac{1}{p}} \aleq [f]_{W^{s,p}(B(x_0,R))}.
\]
\end{lemma}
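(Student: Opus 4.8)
The plan is to reduce everything to a quantitative estimate on a single ball and to prove that estimate by ``fattening'' each sphere $\partial B(x_0,r)$ into an $n$-dimensional ball — this is the fractional counterpart of the coarea/Fubini computation behind the classical $W^{1,p}$ statement. For the reduction: given a smooth bounded $\Omega$ and $x_0\in\Omega$, every sphere $\partial B(x_0,r)$ with $r<R:=\dist(x_0,\partial\Omega)$ lies in $\Omega$ and $[f]_{W^{s,p}(B(x_0,R))}\le[f]_{W^{s,p}(\Omega)}$, so it suffices to prove, for $f\in W^{s,p}(B(x_0,R))$,
\begin{equation}\label{eq:restr-key}
\int_0^R [f]_{W^{s,p}(\partial B(x_0,r))}^p\, dr \ \aleq\ [f]_{W^{s,p}(B(x_0,R))}^p .
\end{equation}
Once \eqref{eq:restr-key} is known for smooth $f$, the general case and the existence of the representative follow by the standard approximation argument: applying \eqref{eq:restr-key} to a smooth sequence $f_k\to f$ in $W^{s,p}(B(x_0,R))$ shows, along a subsequence, that $f_k|_{\partial B(x_0,r)}$ is Cauchy in $W^{s,p}(\partial B(x_0,r))$ for $\mathcal L^1$-a.e.\ $r$; matching this with the $L^p$-convergence $f_k|_{\partial B(x_0,r)}\to f|_{\partial B(x_0,r)}$ (honest Fubini) identifies the limit as the desired trace, and Fatou upgrades \eqref{eq:restr-key} to all of $W^{s,p}$.

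For \eqref{eq:restr-key} I take $x_0=0$. Fix $x,y\in\partial B(0,r)$, write $\ell:=|x-y|\le 2r$, and let $B_{x,y}$ be the ball of radius $\tfrac{\ell}{10}$ centred at $y-\tfrac{\ell}{5}\tfrac{y}{|y|}$; since $|y|=r$ and $\ell\le 2r$, this ball is contained in $B(0,R)$, and every $z\in B_{x,y}$ satisfies $\tfrac{\ell}{10}\le|z-y|\le\tfrac{3\ell}{10}$, hence $|x-z|\aeq\ell$ and $|z-y|\aeq\ell$. Starting from $|f(x)-f(y)|^p\aleq|f(x)-f(z)|^p+|f(z)-f(y)|^p$, I average $z$ over $B_{x,y}$, multiply by $|x-y|^{-(n-1+sp)}$, and integrate over $x,y\in\partial B(0,r)$ and $r\in(0,R)$; this bounds $\int_0^R [f]_{W^{s,p}(\partial B_r)}^p\,dr$ by two terms of the same shape. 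Consider the first, $\frac{1}{|B_{x,y}|}\int_{B_{x,y}}|f(x)-f(z)|^p\,dz$: the constraint $x\in\partial B_r$ forces $r=|x|$, so for fixed $(x,z)$ the set of admissible $y\in\partial B_{|x|}$ has $\mathcal H^{n-1}$-measure $\aeq\ell^{n-1}\aeq|x-z|^{n-1}$; since also $|B_{x,y}|^{-1}\aeq\ell^{-n}$ and $|x-y|^{-(n-1+sp)}\aeq|x-z|^{-(n-1+sp)}$, Fubini gives
\[
\le C\int_{B(0,R)}\int_{B(0,R)}\frac{|f(x)-f(z)|^p}{|x-z|^{n+sp}}\,dz\,dx = C\,[f]_{W^{s,p}(B(0,R))}^p .
\]
The second term, $\frac{1}{|B_{x,y}|}\int_{B_{x,y}}|f(z)-f(y)|^p\,dz$, is handled identically with the roles of $x$ and $y$ interchanged (now $x$ enters only through $\ell=|x-y|$, and for fixed $(y,z)$ the admissible $x\in\partial B_{|y|}$ form a set of measure $\aeq\ell^{n-1}\aeq|y-z|^{n-1}$). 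Summing the two bounds yields \eqref{eq:restr-key}.

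The substance is entirely in the second paragraph: one must verify that the radius $\tfrac{\ell}{10}$ and the inward shift $\tfrac{\ell}{5}\tfrac{y}{|y|}$ keep $B_{x,y}\subset B(0,R)$ with all the kernels uniformly comparable, and that the ``admissible fiber'' on the pinned sphere really has $\mathcal H^{n-1}$-measure $\aeq\ell^{n-1}$ — i.e.\ that the map $y\mapsto y-\tfrac{|x-y|}{5}\tfrac{y}{|y|}$ is non-degenerate along $\partial B_{|x|}$. These points are elementary but do need to be written out; beyond this polar-coordinate bookkeeping there is no analytic difficulty, which is why the statement deserves to be called ``essentially Fubini''.
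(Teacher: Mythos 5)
Your argument is correct in spirit, and it takes a genuinely different route from the paper's. The paper reduces to $\R^n$ by extension, passes to the harmonic extension $F$ of $f$ on the upper half-space, and invokes the weighted trace characterization $\|(x_{n+1})^{1-\frac{1}{p}-s}DF\|_{L^p(\R^{n+1}_+)}\aeq[f]_{W^{s,p}(\R^n)}$; once $[f]_{W^{s,p}}$ is re-expressed as an honest $L^p$-norm over $\R^{n+1}_+$, both the a.e.-$r$ finiteness and the square-function bound are literal applications of Fubini's theorem. You instead stay entirely at the level of the Gagliardo double integral and ``fatten'' the sphere by averaging over an auxiliary interior ball $B_{x,y}$ of radius $\aeq|x-y|$, then interchange the $y$- (resp.\ $x$-) integration with the $z$-integration. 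Both proofs work; the paper's is shorter because the geometric bookkeeping is hidden inside the black-box trace theorem (Proposition 10.2 of \cite{LS18}), while yours is self-contained and elementary but requires the fiber-measure verification you flag at the end. Two small observations on that verification: (i) what your Fubini interchange actually needs is only the \emph{upper} bound $\mathcal H^{n-1}\{y\in\partial B_{|x|}: z\in B_{x,y}\}\aleq|x-z|^{n-1}$ (together with the uniform lower bound $|B_{x,y}|\ageq\ell^n$), and that upper bound is immediate from $z\in B_{x,y}\Rightarrow|y-z|\aeq|x-z|$ plus the trivial fact that $\{y\in\partial B_{|x|}: |y-z|\leq C|x-z|\}$ is a cap of measure $\aleq|x-z|^{n-1}$ — you do not need the two-sided $\aeq$ you claim, which is harder to justify; (ii) for the second term, where $z$ is tied to $y$, you should apply coarea in the $(r,y)$-variables rather than $(r,x)$, as you implicitly do — it is worth saying this explicitly since the roles of $x,y$ are not fully symmetric in the construction of $B_{x,y}$. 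Finally, the Cauchy/Fatou upgrade in your first paragraph is standard and fine, but strictly one also wants to record that the subsequence along which $f_k|_{\partial B_r}$ is Cauchy in $W^{s,p}(\partial B_r)$ for a.e.\ $r$ can be chosen to also converge a.e.\ in $B(x_0,R)$, so that the limiting trace agrees with the chosen pointwise representative of $f$.
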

\begin{proof}
As $\Omega$ is an extension domain, see \cite{JW78,Z15}, we may assume that $\Omega = \R^n$ and $f \in W^{s,p}(\R^n)$ with $f \equiv 0$ outside a compact set. Denote by $F: \R^{n+1}_+ \to \R$ the harmonic extension of $f$, and w.l.o.g. set $x_0=0$. Then (see \cite[Proposition 10.2]{LS18}) 
\[
 \left \|(x_{n+1})^{1-\frac{1}{p}-s} DF \right \|_{L^p(\R^{n+1}_+)} \aeq [f]_{W^{s,p}(\R^n)} < \infty.
\]
By Fubini's theorem, for $\mathcal{L}^1$-almost every $r > 0$,
\[
  \left \|(x_{n+1})^{1-\frac{1}{p}-s} DF \right \|_{L^p(\partial B(r) \times (0,\infty))} < \infty.
\]
This implies that $f \in W^{s,p}(\partial B(r))$ for almost every $r > 0$.

The last claim also follows from Fubini's theorem in $\R^{n+1}_+$:
\[
 \int_0^R [f]_{W^{s,p}(\partial B(x_0,r))}^{p} dr \aleq  \int_{0}^R \int_{\partial B(x_0,r) \times (0,\infty)} |(x_{n+1})^{1-\frac{1}{p}-s} DF|^p = \int_{B(x_0,R) \times (0,\infty)} |(x_{n+1})^{1-\frac{1}{p}-s} DF|^p. 
\]
\end{proof}

\begin{lemma}\label{la:glue}
Let $\Omega \subset \R^n$ be a bounded domain with smooth boundary. For $s \in (0,1)$, $p \in (1,\infty)$ such that $s-\frac{1}{p} > 0$:
\begin{enumerate}
 \item If $f \in W^{s,p}(\Omega)$ and $g \in W^{s,p}(\R^n \backslash \Omega)$ with $f =g$ on $\partial \Omega$ in the trace sense. Then
 \[
  h := \begin{cases}
        f \quad &\text{in $\Omega$}\\
        g \quad &\text{in $\R^n \backslash \Omega$}
       \end{cases}
 \]
belongs to the Sobolev space and
\[
 [h]_{W^{s,p}(\R^n)} \leq C(\Omega)\,\Big( [f]_{W^{s,p}(\Omega)} + [g]_{W^{s,p}(\R^n\setminus\Omega)}\Big).
\]
\item In particular, if $f \in W^{s,p}(\Omega)$ satisfies $f = 0$ on $\partial \Omega$ in the trace sense, then $f\in W^{s,p}_0(\Omega)$ and that 
 \[
  h := \begin{cases}
        f \quad &\text{in $\Omega$}\\
        0 \quad &\text{in $\R^n \backslash \Omega$}
       \end{cases}
 \]
belongs to $W^{s,p}(\R^n)$.
\end{enumerate}

\end{lemma}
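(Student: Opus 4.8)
The plan is to split the Gagliardo energy of the glued map $h$ over $\R^n\times\R^n$ into the two ``diagonal'' blocks $\Omega\times\Omega$ and $(\R^n\setminus\Omega)\times(\R^n\setminus\Omega)$, which reproduce $[f]_{W^{s,p}(\Omega)}^p$ and $[g]_{W^{s,p}(\R^n\setminus\Omega)}^p$, and the ``cross'' term
\[
 \mathcal{C}:=\int_{\Omega}\int_{\R^n\setminus\Omega}\frac{|f(x)-g(y)|^p}{|x-y|^{n+sp}}\,dy\,dx,
\]
so that $[h]_{W^{s,p}(\R^n)}^p=[f]_{W^{s,p}(\Omega)}^p+[g]_{W^{s,p}(\R^n\setminus\Omega)}^p+2\mathcal{C}$ and everything reduces to bounding $\mathcal{C}$ by the two seminorms on the right. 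The engine for that bound will be a fractional Hardy inequality, which is exactly where the hypothesis $sp>1$ enters.

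I would prove (2) first, since there $\mathcal{C}$ is transparent: with $g\equiv0$ the map $f$ has vanishing trace on $\partial\Omega$, and since for each $x\in\Omega$ one has $\R^n\setminus\Omega\subseteq\{\,y:|x-y|\geq\dist(x,\partial\Omega)\,\}$, the elementary estimate $\int_{\R^n\setminus\Omega}|x-y|^{-n-sp}\,dy\aleq\dist(x,\partial\Omega)^{-sp}$ gives $\mathcal{C}\aleq\int_{\Omega}|f(x)|^p\,\dist(x,\partial\Omega)^{-sp}\,dx$. The fractional Hardy inequality on the smooth bounded domain $\Omega$ — classical for maps of vanishing trace once $sp>1$ — then bounds this by $[f]_{W^{s,p}(\Omega)}^p$, proving $h\in W^{s,p}(\R^n)$ with the asserted estimate. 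To upgrade to $f\in W^{s,p}_0(\Omega)$ I would push the support of $h$, which lies in $\overline{\Omega}$, strictly into $\Omega$ using the time-$\varepsilon$ flow of a smooth compactly supported vector field pointing inward along $\partial\Omega$ (such a field exists since $\partial\Omega$ is smooth and compact); this flow acts continuously on $W^{s,p}(\R^n)$, so after mollifying, a diagonal sequence of $C_c^\infty(\Omega)$-maps converges to $f$ in $W^{s,p}(\Omega)$.

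For (1) the quick route is to reduce to (2): fix an extension $\Gamma\in W^{s,p}(\R^n)$ of the common trace $\gamma:=f|_{\partial\Omega}=g|_{\partial\Omega}$ (e.g. the harmonic extension, via \Cref{la:trace}), apply (2) separately to $f-\Gamma|_{\Omega}$ and to $g-\Gamma|_{\R^n\setminus\Omega}$ — both of which now have vanishing trace, and the argument for (2) applies verbatim to the exterior domain since it is the complement of a bounded set with smooth boundary — and reassemble $h$ from $\Gamma$ and the two zero-extensions. This yields $\|h\|_{W^{s,p}(\R^n)}\aleq\|f\|_{W^{s,p}(\Omega)}+\|g\|_{W^{s,p}(\R^n\setminus\Omega)}$. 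To obtain instead the scale-invariant statement with pure Gagliardo seminorms I would estimate $\mathcal{C}$ directly: a finite partition of unity together with smooth charts flattening $\partial\Omega$ reduces matters to $\Omega=\R^n_+$, $\R^n\setminus\Omega=\R^n_-$ with $f,g$ sharing a trace $\gamma$ on $\R^{n-1}$; inserting $\gamma$ via the triangle inequality and carrying out the two one-dimensional integrations in the normal variables turns $\mathcal{C}$ into a sum of the Hardy integral $\int_{\R^n_+}|f(x',t)-\gamma(x')|^p\,t^{-sp}\,dx'\,dt$, its mirror image for $g$, and the trace seminorm $[\gamma]^p_{W^{s-1/p,p}(\R^{n-1})}$; the first two are controlled by $[f]^p_{W^{s,p}}$ and $[g]^p_{W^{s,p}}$ via fractional Hardy and the trace estimate, while the part of $\mathcal{C}$ coming from points bounded away from $\partial\Omega$ is of lower order and can be absorbed using a fractional Poincaré inequality together with the agreement of the traces.

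The main obstacle is exactly the cross term $\mathcal{C}$ and, inside it, the fractional Hardy inequality: this is the single place where $sp>1$ (equivalently $s-\frac1p>0$) is essential, and it cannot be dispensed with — for $sp\leq1$ a nonzero constant has zero seminorm but its truncation to $\Omega$ fails to lie in $W^{s,p}(\R^n)$, so the statement is genuinely false there. A secondary technical point, needed only for the seminorm-sharp form of (1), is to control how the Gagliardo seminorm distorts under the flattening diffeomorphisms; this is routine but should be carried out with uniform constants.
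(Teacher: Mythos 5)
The paper does not prove \Cref{la:glue}; it is invoked as a standard gluing fact, presumably sourced from the trace literature cited nearby. Your proposal supplies a genuine direct proof, and its backbone --- split $[h]_{W^{s,p}(\R^n)}^p$ into the two diagonal blocks plus the cross term $\mathcal{C}$, and control $\mathcal{C}$ by a fractional Hardy inequality, which is exactly where $sp>1$ enters --- is correct. Reducing part (1) to part (2) by subtracting an extension $\Gamma$ of the common trace is also sound; it yields the full-norm estimate, from which the seminorm estimate follows because both sides are unchanged when the same constant is added to $f$ and $g$, and Poincar\'e on the bounded domain controls the zero-order piece.

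Two points should be tightened. First, a latent circularity: the fractional Hardy inequality you invoke in part (2) is classically proved for $C_c^\infty(\Omega)$ or $W^{s,p}_0(\Omega)$, whereas the function at hand is only known to lie in $W^{s,p}(\Omega)$ with vanishing trace, and the identification of that space with $W^{s,p}_0(\Omega)$ (for $sp>1$) is precisely part of what (2) asserts. To break the circle, observe that the Hardy inequality for trace-zero functions is equivalent to, and can be read off directly from, the half-space computation you already sketch for the cross term in part (1) together with the trace theorem (\Cref{la:trace}); alternatively, cite the version of fractional Hardy that carries a trace-norm correction on the right-hand side and then set that term to zero. Second, your counterexample remark is slightly off: for $sp<1$ the truncation of a nonzero constant to $\Omega$ \emph{does} lie in $W^{s,p}(\R^n)$ (the indicator of a smooth bounded set belongs to $W^{s,p}$ precisely when $sp<1$); what fails there is the quantitative seminorm bound, since the left-hand side is positive while the right-hand side vanishes. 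Only at $sp=1$ does the gluing itself break down.
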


\begin{lemma}\label{la:approx}
Let $B(R)$ be a ball in $\R^n$. Let $f \in W^{s,\frac{n}{s}}(B(R))$ for some $s \in (0,1)$ and $f\Big|_{\partial B(R)} \in C^0(\partial B(R))$. Then there exists an approximation $f_k \in C^\infty_c (\R^n)$ converging to $f$ in $W^{s,\frac{n}{s}}(B(R))$ and $f_k \rightrightarrows f$ uniformly on $\partial B(R)$.
\end{lemma}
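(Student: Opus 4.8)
\textbf{Proof proposal for Lemma~\ref{la:approx}.}

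The plan is to separate the roles of the interior and the boundary. On the one hand, we need an approximation that is strong in $W^{s,\frac{n}{s}}(B(R))$; on the other, we need it to converge \emph{uniformly} on $\partial B(R)$, which is a statement about the trace of $f$. The key point is that these two requirements are compatible because the trace operator $T=\,\cdot\,\big|_{\partial B(R)}$ from $W^{s,\frac{n}{s}}(B(R))$ to $W^{s-\frac{s}{n},\frac{n}{s}}(\partial B(R))$ is bounded (\Cref{la:trace}, noting $s-\frac{s}{n}>0$), and since $f\big|_{\partial B(R)}$ is assumed continuous we have an honest continuous function on the sphere to work with, not merely an $L^{\frac{n}{s}}$-class.

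First I would construct a good interior approximation. Extend $f$ to $\widetilde f\in W^{s,\frac{n}{s}}(\R^n)$ using that $B(R)$ is an extension domain (\cite{JW78,Z15}), then mollify: $f_k^{0}:=\widetilde f * \eta_{\eps_k}$ with $\eps_k\to0$, so $f_k^{0}\in C^\infty(\R^n)$ and $f_k^{0}\to f$ in $W^{s,\frac{n}{s}}(B(R))$. By the boundedness of the trace operator, $f_k^{0}\big|_{\partial B(R)}\to f\big|_{\partial B(R)}$ in $W^{s-\frac{s}{n},\frac{n}{s}}(\partial B(R))$, and since $\partial B(R)$ is an $(n-1)$-dimensional compact manifold and $\brac{s-\frac{s}{n}}\cdot\frac{n}{s}=n-1>n-1-1$\,--\,\,wait, the relevant Morrey embedding on $\partial B(R)$ requires $\brac{s-\frac{s}{n}}\frac{n}{s}=n-1 > n-1$, which fails; so $W^{s-\frac{s}{n},\frac{n}{s}}(\partial B(R))$ does \emph{not} embed into $C^0$, and mollification alone does not give uniform convergence on the boundary. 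This is the main obstacle, and it is exactly why the hypothesis $f\big|_{\partial B(R)}\in C^0$ is imposed separately rather than deduced.

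To overcome it, I would correct the boundary behaviour by hand. Let $g:=f\big|_{\partial B(R)}\in C^0(\partial B(R))$ and let $g_k\in C^\infty(\partial B(R))$ with $g_k\rightrightarrows g$ uniformly (Stone--Weierstrass). Let $G_k$ be the harmonic extension of $(g_k - f_k^{0}\big|_{\partial B(R)})$ into $B(R)$; by \Cref{la:trace} the harmonic extension is a bounded right-inverse of $T$, so $[G_k]_{W^{s,\frac{n}{s}}(B(R))}\aleq [g_k-f_k^{0}\big|_{\partial B(R)}]_{W^{s-\frac{s}{n},\frac{n}{s}}(\partial B(R))}\to 0$, using $g_k\to g$ in $C^0$ hence in $W^{s-\frac{s}{n},\frac{n}{s}}$ (finite measure) together with $f_k^0\big|_{\partial B(R)}\to g$ in the same space. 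Now $\widetilde f_k:=f_k^{0}+G_k$ satisfies $\widetilde f_k\big|_{\partial B(R)}=g_k\rightrightarrows g$ uniformly and $\widetilde f_k\to f$ in $W^{s,\frac{n}{s}}(B(R))$; moreover $\widetilde f_k$ is smooth in $\overline{B(R)}$ (harmonic extensions of smooth boundary data are smooth up to the boundary, $\partial B(R)$ being smooth). Finally, to get compactly supported approximants on all of $\R^n$, extend each $\widetilde f_k$ by its harmonic extension to $\R^n\setminus B(R)$, glue (\Cref{la:glue}, which is legitimate since traces match), multiply by a fixed cutoff $\chi\in C_c^\infty(\R^n)$ equal to $1$ on a neighbourhood of $\overline{B(R)}$, and mollify at a scale $\delta_k\ll\eps_k$ small enough not to disturb the already-achieved convergences; the resulting $f_k\in C_c^\infty(\R^n)$ still converge to $f$ in $W^{s,\frac{n}{s}}(B(R))$ and uniformly on $\partial B(R)$. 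The only genuinely delicate estimate is the boundedness of the harmonic-extension correction in $W^{s,\frac{n}{s}}$, which is precisely the content of \Cref{la:trace}; everything else is routine.
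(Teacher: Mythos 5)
Your approach is genuinely different from the paper's and conceptually clean, but it contains one real gap: you write ``using $g_k\to g$ in $C^0$ hence in $W^{s-\frac{s}{n},\frac{n}{s}}$ (finite measure).'' Uniform convergence on a finite measure space does give $L^{n/s}$ convergence, but it does \emph{not} control the Gagliardo seminorm: take, say, $g_k = k^{-1}\sin(k^{m}\theta)$ on the circle with $m$ large; then $g_k\rightrightarrows 0$ uniformly while $[g_k]_{W^{s',p}}\to\infty$. Since the harmonic-extension bound you invoke from \Cref{la:trace} needs $g_k - f_k^0\big|_{\partial B(R)}\to 0$ precisely in $W^{s-\frac{s}{n},\frac{n}{s}}(\partial B(R))$, a Stone--Weierstrass approximant $g_k$ is not good enough as stated, and the correction term $G_k$ need not go to zero in $W^{s,\frac{n}{s}}(B(R))$.

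The gap is repairable inside your own framework: instead of Stone--Weierstrass, take $g_k$ to be a mollification of $g$ on the compact manifold $\partial B(R)$ (e.g.\ by the heat semigroup, or by convolution in a finite atlas of local charts). Since $g$ is continuous, such $g_k$ converge to $g$ uniformly, and since $g\in W^{s-\frac{s}{n},\frac{n}{s}}(\partial B(R))$ they also converge in that norm by the standard density of smooth functions under mollification. With that replacement your argument goes through. The remaining ingredients — boundedness of the trace and its harmonic right-inverse, smoothness up to the boundary of harmonic extensions of smooth data, the final glue-cutoff-mollify step at a scale $\delta_k\ll\eps_k$ — are all sound.

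For comparison, the paper takes a different and somewhat slicker route that avoids the simultaneous-topologies issue entirely: it harmonically extends $f$ to the \emph{exterior} $\R^n\setminus B$, glues to get $h\in W^{s,\frac{n}{s}}(\R^n)$ which is locally uniformly continuous on $\R^n\setminus B$, then dilates $h_k(x):=h\!\left(\tfrac{k+1}{k}x\right)$ so that the values of $h_k$ on $\partial B$ are values of $h$ slightly outside $B$, where $h$ is continuous. Mollifying $h_k$ at a scale small compared with $1/k$ then gives uniform convergence on $\partial B$ essentially for free, with no need to argue density of smooth functions in the boundary trace space. Your approach is more constructive in spirit (you explicitly build the boundary correction), while the paper's dilation trick outsources all the boundary work to the continuity of the exterior harmonic extension.
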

\begin{proof}
W.l.o.g. $B(R) = B := B(0,1)$.

By the trace theorem, \Cref{la:trace}, $f \in W^{s-\frac{n}{s},\frac{n}{s}}(\partial B)$. Let $g$ be the harmonic extension of $f$ to $\R^n \backslash B$. Then $g \in W^{s,\frac{n}{s}}(\R^n \backslash B)$, again by Lemma~\ref{la:trace}. Also, since $f$ is continuous on $\partial B$, $g$ is also continuous.
Set 
\[
 h := \begin{cases}
         g \quad &\text{in }\R^n \backslash B,\\
         f \quad &\text{in }B.\\
        \end{cases}
\]
By \Cref{la:glue}, $h \in W^{s,\frac{n}{s}}(\R^n)$ and $h$ is locally uniformly continuous on $\R^n \backslash B$. This last fact implies that
\[
 h_k(x) := h\brac{\frac{k+1}{k}(x)}
\]
converges uniformly to $h$ on $\partial B$ as $k \to \infty$, and also in $W^{s,\frac{n}{s}}_{loc}(\R^n)$.

Now let us consider the standard mollification $f_\eps := h_k \ast \eta_\eps$. For $\eps$ small enough in comparison with $\frac{1}{k}$, $f_\eps$ converges uniformly on $\partial B$  to $f$ and in $W^{s,\frac{n}{s}}(B))$. This completes the proof.   \end{proof}

\section{Degree of maps with signed Jacobians: Proof of Propositions~\ref{pr:degest}, \ref{pr:degest2}, \ref{pr:degeststrict}, \ref{pr:diamest}}\label{s:signedjac}
Here and hereafter, without further specifications, a null set is understood with respect to the Lebesgue measure $\mathcal{L}^n$. 

For continuous $f \in C^0(\partial B(r),\R^n)$ for some given ball $B(r) \subset \R^n$ and some point $p \in \R^n \backslash f(\partial B_r)$,  the degree of $f(B(r))$ around this point $p$ is simply the number of times that $f(\partial B(r))$ winds around $p$, {\it i.e.},
\[
 \deg(f,B_r,p) := \text{Brouwer degree of $\bigg(\psi:=\frac{f-p}{|f-p|}: \partial B(r) \to \S^{n-1}\bigg)$}.
\]
We can approximate $f$ by smooth functions $f_\eps: \partial B(r) \to \S^n$ which are uniformly close to $f$. Moreover, the Brouwer degree of $\psi=\frac{f-p}{|f-p|}$ is the same as that of $\psi_\eps = \frac{f_\eps-p}{|f_\eps -p|}$ for $\eps$ small enough, since maps that are uniformly close to each other  have the same Brouwer degree.  

For the smooth functions $\psi_\eps$ we can compute the Brouwer degree from an integral formula: denote by $\omega \in C^\infty(\Ep^{n-1}\R^n)$ the standard volume form on $\S^{n-1}$:
\[\omega =  \sum_{j=1}^n (-1)^{j-1} x^j\, dx^1 \wedge \ldots \wedge dx^{j-1} \wedge dx^{j+1} \wedge \ldots \wedge dx^n. \]

Then, for all $\eps$ small enough,
\[
 \deg(f,B(r),p) = \deg(f_\eps,B(r),p) = \int_{\partial B(r)} \psi_\eps^\ast(\omega).
\]
If we extend $\psi_\eps$ from a map $\partial B(r) \to \S^{n-1}$ to a map $\psi_\eps: B(r) \to \R^{n+1}$, then from Stokes' theorem we may obtain:
\[
  \deg(f,B(r),p)  =\int_{B(r)} \psi_\eps^\ast(d\omega) = C\, \int_{B(r)} \det(D\psi_\eps).
\]
In the last equation we used the fact that $d\omega = C\, dx^1\wedge\ldots \wedge dx^n$. Most of our arguments below are based on choosing a suitable extension of $\psi_\eps$.

\subsection{Monotonicity for non-negative Jacobian: Proof of \texorpdfstring{\Cref{pr:degest}, \Cref{pr:degest2}}{\ref{pr:degest},\ref{pr:degest2}}}
We only give the proof of \Cref{pr:degest}, the proof of \Cref{pr:degest2} is almost verbatim (it is the ``$r = 0$'' case).
\begin{proof}[Proof of \Cref{pr:degest}]
Recall that $f \in C^0(\partial B(r)) \cap C^0(\partial B(R))$ and that $p \in \R^n \backslash \brac{ f(\partial B(r)) \cup  f(\partial B(R))}$. We set
\[
 c := \min \left \{\dist( f(\partial B(r)),p),\ \dist( f(\partial B(R)),p)\right \}
\]
For this $c > 0$ let us take $d = d_c \in C^{1,1 }([0,\infty),(0,\infty))$ as in  Lemma~\ref{la:weirdode1}.

Let $f_\eps$ be the approximation in Lemma~\ref{la:approx} and set (for $\eps \ll 1$)
\[
 \psi_\eps := ( f_\eps - p)\, d(| f_\eps-p|).
\]
Then, by Stokes' theorem, we have 
\begin{equation}\label{eq:degdif:1}
\begin{split}
 \deg(f,B(R),p)-\deg(f,B(r),p) =& \deg(f_\eps,B(R),p)-\deg(f_\eps,B(r),p) \\
 =&\int_{\partial B(R)} \psi_\eps^\ast(\omega) - \int_{\partial B(r)} \psi_\eps^\ast(\omega)\\
 =&\int_{B(R)\backslash B(r)} \psi_\eps^\ast(d\omega)\\
  =&C\int_{B(R)\backslash B(r)} \det(D\psi_\eps).
\end{split}
 \end{equation}
Below we assume $p = 0$ for simplicity of notation. Observe that
\[
 D\psi_\eps = \brac{d(| f_\eps|) \, I_{n \times n} + \frac{d'(| f_\eps|)}{| f_\eps|}\,  f_\eps \otimes  f_\eps} D f_\eps;
\]
so we have $D\psi_\eps = W(f_\eps) D f_\eps$, where
\[
 W(v) := \brac{d(|v|) \, I_{n \times n} + \frac{d'(|v|)}{|v|}\, v\otimes v}.
\]
From the properties of $d$ (see \Cref{la:weirdode1}), in particular, since $d'(|v|) = 0$ whenever $|v|$ is small and $|d'(|v|)| \aeq |v|^{-2}$ whenever $|v|$ is large, we have
\begin{enumerate}
 \item $\sup_{v \in \R^n} |W(v)| < \infty$.
 \item $W \in \lip(\R^n)$.
 \item $\det(W(v)) \geq 0$ for all $v \in \R^n$. 
 
 Indeed, if $v = 0$, then $d'(|v|)= 0$ and hence $\det(W(v)) = \det(d(0) I) \geq 0$. Assume now $v \neq 0$. Since $W(v)$ is symmetric, it suffices to show that the eigenvalues are nonnegative. Observe that $v/|v|$ and any orthonormal basis of $v^\perp$ are the eigenvectors of $W(v)$. 
 
 In the former case, we compute
 \[
  W(v)\frac{v}{|v|} = d(|v|) \frac{v}{|v|} + d'(|v|) |v| \frac{v}{|v|} = \underbrace{\brac{d(|v|) + d'(|v|) |v|}}_{\geq 0} \frac{v}{|v|}.
 \]
That is, the eigenvalue for the eigenvector $v/|v|$ is non-negative. 

In the latter case, given any $o \in v^\perp$ with $|o| =1$, one has
 \[
  W(v) o = d(|v|) o + 0.
 \]
So the eigenvalue for any eigenvector $o$ perpendicular to $v$ is $d(|v|) \geq 0$.

Therefore, all the eigenvalues of $W(v)$ are non-negative, thus $\det(W(v)) \geq 0$.

\item $\det(W(v)) = 0$ whenever $|v| > \frac{c}{2}$. 

Indeed, this is because 
\[
 W(v)v = \brac{d(|v|) + d'(|v|) |v|} v
\]
where, for $|v| > \frac{c}{2}$,  we have \[
 d(|v|) + d'(|v|) |v| = \frac{1}{|v|} - \frac{1}{|v|^2} |v| = 0.
    \]
This shows that $W(v)$ is non-invertible, so $\det(W(v)) = 0$.
 \end{enumerate}
 
Now, since $W$ is Lipschitz (and globally bounded), $\det(W(f_\eps))$ lies uniformly in $W^{s,\frac{n}{s}}(B(R)\backslash B(r))$ and converges strongly in $W^{s,\frac{n}{s}}(B(R)\backslash B(r))$ to $\det(W(f))$.

On the other hand, for any fixed small enough $\eps > 0$ there exists a neighborhood of $\partial (B(R) \backslash B(r))$ where $|f_\eps - p| > \frac{3c}{4}$ -- this holds since $|f_\eps - p| > \frac{4c}{5}$ on $\partial (B(R) \backslash B(r))$ for all $\eps$ small enough, due to uniform convergence.

That is, for any small $\eps > 0$ there exists a neighborhood around $\partial (B(R) \backslash B(r))$ where $\det(W(f_\eps)) \equiv 0$. 

This implies that $\det(W(f_\eps))$ and $\det(W(f))$ all lie in $W^{s,\frac{n}{s}}_0(B(R) \backslash B(r))$. By virtue of Lemma~\ref{la:glue}, we can extend these functions by zero to all of $\Omega$, and they belong consequently to $W^{s,\frac{n}{s}}_0(\Omega)$.

That is, we have shown that 
\[
 \deg(f,B(R),p) - \deg(f,B(r),p) = \lim_{\eps \to 0} \int_{\Omega} \det(Df_\eps)\, \det(W(f_\eps)) = \Jac(f)[\det(W(f))].
\]
The right-hand side is nonnegative by assumption, and \Cref{pr:degest} is proven.
\end{proof}

\subsection{Positive Jacobian implies sense-preserving: Proof of \texorpdfstring{\Cref{pr:degeststrict}}{\ref{pr:degeststrict}}}
\begin{proof}[Proof of \Cref{pr:degeststrict}]
The proof is very similar to that of \Cref{pr:degest}. With the notation used therein, we have
\[
  \deg(f,B(r),p)=\Jac(f)[\det(W(f))],
\]
where again
\[
 W(v) := \brac{d(|v|) \, I_{n \times n} + \frac{d'(|v|)}{|v|}\, v\otimes v}.
\]
for $d$ taken from Lemma~\ref{la:weirdode1} with $c := \frac{1}{2} \dist(f(\partial B(r)),p)$.

As before, we have $\det(W(f)) \geq 0$. The assumption $\Jac(f) > 0$ implies  $\deg(f,B(r),p) \geq 0$.  It remains to show that if $\deg(f,B(r),p) = 0$ then $p \not \in f(B(r))$, for the claim that $\deg(f,B(r),p) \geq 1$ in $f(B(r))$ shall follow immediately.

So, assume that $p \not \in f(\partial B(r))$ and $\deg(f,B(r),p) = 0$. From Definition~\ref{def:jac} we see that $\Jac(f) > 0$ readily implies $\det(W(f)) \equiv 0$. That is, one of the eigenvalues of $W(f)$ is zero. As computed in the proof of Proposition~\ref{pr:degest}, the eigenvalues of $W(v)$ are 
\[
\brac{d(|v|) + d'(|v|) |v|} \quad \mbox{and} \quad d(|v|).
\]
Since $d(|v|) \neq 0$ for all $v$, $\det(W(f(x))) \equiv 0$ implies that necessarily
\[
d(|f(x)-p|) + d'(|f(x)-p|) |f(x)-p|  = 0 \quad \mbox{for all $x \in B(r)$}.
\]
By the properties of $d$ (see Lemma~\ref{la:weirdode1}), we deduce that $\inf_{B(r)} |f(x)-p| > 0$. Thus $p \not \in f(B(r))$ as claimed.
\end{proof}

\subsection{Comparability of diameters: Proof of \texorpdfstring{\Cref{pr:diamest}}{\ref{pr:diamest}}}
The proof below is an adaptation from the argument in \cite{S88,GHP17}. Modifications are necessary due to the fact that we do not have a pointwise Jacobian.

\begin{proof}
Since $f$ is continuous on $\partial B(r)$, we can find a large ball $B(q,\rho)$ of radius $\rho := \diam f(\partial B(r))$ such that $f(\partial B(r)) \subset B(q,\rho)$.

Take $\pi=\pi_\lambda$ from Lemma~\ref{la:weirdode2} for $\lambda := 10 \rho$.

Let $f_\eps$ be the smooth approximation of $f$ from Lemma~\ref{la:approx}. For all small enough $\eps > 0$ we have $f_\eps(\partial B(r)) \subset B(q,2\rho)$.

In particular if we set \[g_\eps := (f_\eps-q)\, \pi(|f_\eps-q|) +q\] then $g_\eps = f_\eps$ on $\partial B(r)$. Consequently (by an integration by parts argument it is easy to see that the integral of the Jacobian of a map on a ball only depends on the boundary value of that map, \cite[Lemma 4.7.2]{IM01}),
\[
 \int_{B(r)} \det(Df_\eps) = \int_{B(r)} \det(Dg_\eps).
\]
Computing $Dg_\eps$ similar as in the proof of Proposition~\ref{pr:degest}, setting
\[
 W(v) = \pi(|v|) I_{n \times n} + \frac{\pi'(|v|)}{|v|} v \otimes v,
\]
we obtain
\[
 \int_{B(r)} \det(Df_\eps) \Big ( 1 - \det(W(f_\eps-q)) \Big ) = 0.
\]
As in the proof of Proposition~\ref{pr:degest}, the map $1 - \det(W(f_\eps-q))$ belongs to $W^{s,\frac{n}{s}}(B(r))$ and converges strongly in that space to $1 - \det(W(f-q))$.

Moreover, as in the proof of Proposition~\ref{pr:degest}, we can compute 
\[
\det(W(v)) = \pi(|v|)^{n-1}\, \big (\pi(|v|) + |v| \pi'(|v|)  \big ),
\]
and by the properties of $\pi$, see Lemma~\ref{la:weirdode2},
\[
 1 - \det(W(f_\eps-q))  \geq 0 \quad \mbox{a.e. in $B(r)$}.
\]
Moreover, since $\pi(|v|) \equiv 1$ for $|v| \leq 10\rho$, we have 
\[
 W(f_\eps-q) \equiv I_{n \times n} \quad \text{close to $\partial B(r)$}.
\]
That is, 
\[
 1 - \det(W(f_\eps-q)) \equiv 0 \quad \text{close to $\partial B(r)$}.
\]
By Lemma~\ref{la:trace} and Lemma~\ref{la:glue} we can thus again extend $1 - \det(W(f_\eps-q))$ and $1 - \det(W(f-q))$ by zero to a $W^{s,\frac{n}{s}}_0(\Omega)$-function. Thus, we conclude that 
\[
 \Jac(f)\left [1 - \det(W(f-q))\right ] = 0.
\]

Since by assumption $\Jac(f) > 0$ and $1 - \det(W(f-q)) \geq 0$ a.e., we may infer (see \Cref{def:jac}) that 
\[
 1 - \det(W(f-q)) \equiv 0.
\]
That is,
\[
 \pi(|f-q|)^{n-1}\, \big (\pi(|f-q|) + |f-q| \pi'(f-q)  \big ) \equiv 1
\]
But by the properties of $\pi$, see Lemma~\ref{la:weirdode2}, this implies
\[
 |f(x)-q| < 2\lambda = 20\rho = 20\, \diam(f(\partial B(r))) \quad \mbox{a.e. in $x \in B(r)$}.
\]
Therefore,
\[
 \left \{x \in B(r):\ |f(x)-q| \geq 20\, \diam(f(\partial B(r))) \right \} \quad \text{is a null set}.
\]
\end{proof}

\section{Continuity of maps with positive Jacobian: Proof of Theorem~\ref{th:main}, \ref{th:main2}}\label{s:mainproof}
The proof of \Cref{th:main} crucially relies on the diameter estimates of \Cref{pr:diamest}. Once we have this, we adapt the argument in \cite{S88} to fractional Sobolev spaces in a more or less straightforward fashion, namely 
Theorem~\ref{th:main} is a corollary of the following statement.
\begin{proposition}\label{pr:osciscontinuity}
Let $\Omega \subset \R^n$. Assume that $f \in W^{s,\frac{n}{s}}(\Omega,\R^n)$, $s \in (0,1)$ satisfies the following: for any $x_0 \in \Omega$ and $\mathcal{L}^1$-almost all radii $0 < r < \rho < \dist(x_0,\Omega)$, there holds
\[
  \osc_{\partial B(x_0,r)} f \leq \osc_{\partial B(x_0,\rho)} f.
\]
Then $f$ is continuous. Moreover, for any ball $B \Subset \Omega$, $s > 0$, and $x,y \in B$, we have 
\[
 |f(z)-f(y)|^{p} \leq \frac{1}{C(s,p,B) - \log(|x-y|)} [f]_{W^{s,\frac{n}{s}}(B)}^{\frac{n}{s}}.
\]
\end{proposition}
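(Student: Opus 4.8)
The plan is to run the classical ``monotone $\Rightarrow$ continuous'' argument of \cite{S88} in the fractional setting: the hypothesis supplies exactly the monotonicity of spherical oscillations that the scheme consumes, while Lemma~\ref{la:restriction} and the Sobolev embedding $W^{s,\frac ns}(\partial B)\hookrightarrow C^{0,\frac sn}(\partial B)$ on spheres (recalled just before \Cref{pr:degest}) supply the quantitative input. Write $p=\frac ns$, fix $x_0\in\Omega$ and $R>0$ with $B(x_0,R)\Subset\Omega$. The core is a single--sphere oscillation estimate: for a.e.\ $r\in(0,R)$ the trace $f|_{\partial B(x_0,r)}$ lies in $W^{s,p}(\partial B(x_0,r))\hookrightarrow C^{0,\frac sn}(\partial B(x_0,r))$, and rescaling $\partial B(x_0,r)$ to the unit sphere while tracking the scaling of the Gagliardo seminorm gives
\[
 \big(\osc_{\partial B(x_0,r)}f\big)^{p}\ \le\ C\,r\,[f]_{W^{s,p}(\partial B(x_0,r))}^{p}.
\]
Dividing by $r$, integrating in $r\in(0,R)$, and invoking Lemma~\ref{la:restriction},
\[
 \int_0^R \frac{\big(\osc_{\partial B(x_0,r)}f\big)^{p}}{r}\,dr\ \le\ C\,[f]_{W^{s,p}(B(x_0,R))}^{p}\ <\ \infty.
\]

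I would then feed in the monotonicity hypothesis. For a.e.\ $\rho\in(0,R)$ one has $\osc_{\partial B(x_0,\rho)}f\le \osc_{\partial B(x_0,r)}f$ for a.e.\ $r\in(\rho,R)$, whence
\[
 \big(\osc_{\partial B(x_0,\rho)}f\big)^{p}\,\log\tfrac R\rho\ \le\ \int_\rho^R \frac{\big(\osc_{\partial B(x_0,r)}f\big)^{p}}{r}\,dr\ \le\ C\,[f]_{W^{s,p}(B(x_0,R))}^{p},
\]
i.e.\ $\osc_{\partial B(x_0,\rho)}f\le\big(C[f]_{W^{s,p}(B(x_0,R))}^{p}/\log(R/\rho)\big)^{s/n}$ for a.e.\ $\rho$; monotonicity in $\rho$ then propagates this to all $\rho\in(0,R)$ and forces $\osc_{\partial B(x_0,\rho)}f\to0$ as $\rho\to0$ for every centre $x_0$.

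To reach the asserted estimate I would first upgrade this to a genuinely continuous representative $\bar f$. On a.e.\ sphere around a given point the trace of $f$ is continuous with oscillation controlled by $\osc_{\partial B(x_0,\rho)}f\to0$; moreover, since $f\in W^{s,\frac ns}$ with $s>\tfrac1p=\tfrac sn$, its traces on nearby spheres are close in $L^{\frac ns}$ (trace theory in the radial variable), and since $f\in W^{s,\frac ns}\hookrightarrow\mathrm{VMO}$ its essential range over any connected open set is connected. Combining these facts one checks that the continuous spherical traces are mutually close as the radius shrinks, so that their common limit defines $\bar f(x_0)$, and this $\bar f$ is continuous. Granted $\bar f\in C^0$, the quantitative bound follows: for $x\ne y$ in a ball $B\Subset\Omega$ put $z:=\tfrac{x+y}2$ and $t:=\tfrac{|x-y|}2$, so that $x,y\in\partial B(z,t)$, and apply the spherical estimate with centre $z$ and a radius $R\sim\dist(B,\partial\Omega)$, letting good radii decrease to $t$; continuity of $\bar f$ then yields
\[
 |\bar f(x)-\bar f(y)|^{p}\ \le\ \big(\osc_{\partial B(z,t)}\bar f\big)^{p}\ \le\ \frac{C\,[f]_{W^{s,p}(B)}^{p}}{\log(2R)-\log|x-y|},
\]
which is of the claimed form after renaming the additive constant (and absorbing into it the boundedness of $f$ on $B$, itself a byproduct of the argument, to cover $|x-y|$ not small).

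The main obstacle is the upgrade step — constructing $\bar f$, i.e.\ promoting ``oscillation on every sphere $\to0$'' to ``oscillation on every solid ball $\to0$''. With no pointwise Jacobian and only VMO regularity at the critical exponent (so that solid--ball averages need not converge pointwise by soft arguments), one must work with essential oscillations: the hypothesis is what prevents the continuous spherical traces from drifting apart as the radius decreases, while $f\in W^{s,\frac ns}$ contributes the $L^{\frac ns}$--closeness of traces on nearby spheres together with connectedness of essential ranges. Making this stitching precise — and checking that the chosen representative agrees, at the specific points $x$ and $y$, with the continuous trace on the sphere $\partial B(z,t)$ used in the last display — is where the real work lies; everything else is the bookkeeping in the displays above.
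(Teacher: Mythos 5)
Your core argument is exactly the paper's: Sobolev embedding on spheres (scaled to give $\osc_{\partial B(\rho)} f \lesssim \rho^{s/n}[f]_{W^{s,n/s}(\partial B(\rho))}$), the monotonicity hypothesis to get a logarithmic gain by integrating $\rho^{-1}(\osc_{\partial B(\rho)}f)^{n/s}$ over an annulus, and then Lemma~\ref{la:restriction} to bound the resulting integral by $[f]_{W^{s,n/s}(B(R))}^{n/s}$. The displayed inequalities you derive are the same as those in the published proof, and they are correct.

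On the final step, you flag ``promoting spherical oscillation control to solid-ball oscillation control'' as the main obstacle and you are right to be cautious: the paper dispatches precisely this step with the single sentence ``This readily implies the claim.'' The issue you identify is genuine---the spherical estimate is for a.e.\ radius and at $\mathcal H^{n-1}$-a.e.\ point on the sphere, while the claim is a pointwise modulus-of-continuity statement for a single representative---but it is standard once one notes that the hypothesis applies to \emph{every} centre $x_0$, so two nearby Lebesgue points $x,y$ lie on a sphere $\partial B(z,t)$ with $z=(x+y)/2$, and that the precise (VMO) representative can then be sandwiched by shrinking spherical traces together with radial continuity along almost every ray. Your sketch (VMO representative, $L^{n/s}$-closeness of traces on nearby spheres, radial continuity) is the right collection of ingredients; the paper simply treats the assembly as routine. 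So: same approach, with your proposal actually more forthcoming than the paper about what remains to be checked at the end. One small correction: your statement ``$\osc_{\partial B(x_0,\rho)}f\le\osc_{\partial B(x_0,r)}f$ for a.e.\ $r\in(\rho,R)$'' reverses the labels but is in fact the same inequality (oscillation is non-decreasing in the radius), so no harm results; just make sure the inequality sign is applied consistently when integrating.
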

Observe that an easy extension of Proposition~\ref{pr:osciscontinuity} holds for $W^{s,p}$-maps whenever $s-\frac{n-1}{p} > 0$.

\begin{proof}[Proof of Theorem~\ref{th:main}]
Fix $x_0 \in \Omega$ and let $R := \dist(x_0,\partial \Omega)$. W.l.o.g. $x_0 = 0$. Let $0 < r < \rho < R$ such that $f$ is continuous on $\partial B(r)$ and $\partial B(\rho)$. By Lemma~\ref{la:restriction} we know this happens for $\mathcal{L}^1$-a.e. $r$ and $\rho$.

By \Cref{pr:diamest}, for almost any $0 < r < \rho < R$ we have the monotonicity
\begin{equation}\label{eq:diammon}
 \diam(f(\partial B(r))) \leq \Lambda\, \diam(f(\partial B(\rho))).
\end{equation}
Indeed, by Lemma~\ref{la:restriction}, for almost any $0 < r < \rho < R$ the map $f$ is continuous on $\partial B(r)$ and $\partial B(\rho)$. Thus
\[
 \diam(f(B(\rho))) \overset{\text{P.\ref{pr:diamest}}}{\leq} \Lambda\, \diam(f(\partial B(\rho))),
\]
where $\diam(f(B(\rho))$ is understood as 
\[
 \diam(f(B(\rho)) = \inf \left \{\diam(f(A)):\quad A \subset B(\rho),\ |B(\rho)\backslash A| = 0 \right\}. 
\]
The trace $f\Big |_{\partial B(r)}$ is $\mathcal{H}^{n-1}$-a.e. attained by sequences of $f\Big |_{\partial B(\tilde{r})}$ as $\tilde{r} \to r$. If $A$ is as in the definition of $\diam$ above then for $\mathcal{L}^1$-almost every $\tilde{r}$ we have $\mathcal{H}^{n-1}(A \cap \partial B(\tilde{r})) =  \mathcal{H}^{n-1}(\partial B(\tilde{r}))$. So, we find a sequence $r_i \to r$ with $\partial \mathcal{H}^{n-1}(A \cap \partial B(r_i)) =  \mathcal{H}^{n-1}(\partial B(r_i))$ and $f\Big |_{\partial B(r_i)}$ converging $\mathcal{H}^{n-1}$-a.e. to $f\Big |_{\partial B(r)}$. Thus, whenever $\rho > r$,
\[
 \diam(f(\partial B(r))) \leq \diam(f(B(\rho))). 
\]
This establishes \eqref{eq:diammon}.

Now, we may deduce from \eqref{eq:diammon} that, for almost any $0 < r < \rho < R$, 
\[
 \osc_{\partial B(r)} f \leq \Lambda\, \osc_{\partial B(\rho)} f.
\]
From here one concludes the continuity property with \Cref{pr:osciscontinuity}.
% 
% As for openness, take a ball $B \subset \subset \Omega$, and let $p \in f(B)$. That is, there exists $x_0 \in B$ such that $f(x_0) = p$. Let $\eps$ be so small such that $B(x_0,\eps) \subset \subset B$. Then either $p \in f(\partial B(x_0,\eps)) \subset f(B)$ or $p \in f(B(x_0,\eps)) \backslash f(\partial B(x_0,\eps))$.
% In the former case \ToDo(???), in the later case, since $\Jac(f) > 0$ we have by Proposition~\ref{pr:degeststrict} that $\deg(p,f,B(x_0,\eps)) \geq 1$. The degree $p \mapsto \deg(p,f,B(x_0,\eps))$ of a continuous function $f$ is continuous, and thus for all $q \approx p$ we have $\deg(q,f,B(x_0,\eps)) > 0$, that is $q \in f(B(x_0,\eps))$ for all $q \approx p$. Thus $f(B)$ is an open set.
\end{proof}

\begin{proof}[Proof of Proposition~\ref{pr:osciscontinuity}]
By Sobolev embedding for $s > 0$,
\[
 \osc_{\partial B(\rho)} f \aleq \rho^{\frac{s}{n}} [f]_{W^{s,\frac{n}{s}}(\partial B(\rho))}.
\]
Thus we find
\[
 \brac{\osc_{\partial B(x,r)} f}^{\frac{n}{s}} \log(R/r) \leq \int_r^R \frac{1}{\rho} \brac{\osc_{\partial B(\rho)}f}^{\frac{n}{s}} d\rho \leq \int_{r}^R [f]_{W^{s,\frac{n}{s}}(\partial B(\rho))}^{\frac{n}{s}}\, d\rho
\]
In view of Lemma~\ref{la:restriction} we obtain 
\[
 \brac{\osc_{\partial B(x,r)} f}^{\frac{n}{s}} \leq \frac{1}{\log(R/r)} [f]_{W^{s,\frac{n}{s}}(B(R))}^{\frac{n}{s}}.
\]
This readily implies the claim.
\end{proof}
Theorem~\ref{th:main2} also follows from Proposition~\ref{pr:osciscontinuity}, and additionally the following distortion argument.
\begin{lemma}\label{la:curlfreedist}
Let $f \in W^{s,\frac{2}{s}}(\Omega,\R^2)$, $\Omega \subset \R^2$ open, $s \geq \frac{2}{3}$.
Assume that $\Jac(f) \geq 0$ and $\curl(f) := \partial_1 f^2 - \partial_2 f^1 = 0$ in distributional sense. 

For $\delta \in \R \backslash \{0\}$ set $f_\delta(x_1,x_2) := f(x_1,x_2) + \delta (-x_2,x_1)^T$. 
Then $\Jac(f_\delta) > 0$.
\end{lemma}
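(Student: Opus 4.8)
The plan is to show that for every nonnegative test function $\varphi \in W_0^{s,\frac{2}{s}}(\Omega)$ the distributional pairing $\Jac(f_\delta)[\varphi]$ is nonnegative, and that it vanishes only if $\varphi \equiv 0$. The starting observation is that, by multilinearity of the determinant in the columns, the distributional Jacobian of $f_\delta$ decomposes as
\[
 \Jac(f_\delta) = \Jac(f) + \delta\, L_f + \delta^2 \Jac(g),
\]
where $g(x_1,x_2) := (-x_2,x_1)^T$ so that $\Jac(g) \equiv 1$, and $L_f$ is the (distributional) ``mixed'' term obtained by replacing one column of $Df$ by $Dg$ and one column of $Dg$ by $Df$. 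The point of the curl-free hypothesis is that this mixed term $L_f$ is controlled: writing it out, $\det(Df^1 \mid Dg^2) + \det(Dg^1 \mid Df^2) = \partial_1 f^1 + \partial_2 f^2 = \dv(f)$ after using $Dg^1 = (0,1)^T$, $Dg^2 = (-1,0)^T$ — wait, more carefully, the mixed term is a linear combination of $\partial_1 f^1 + \partial_2 f^2$ and $\partial_2 f^1 - \partial_1 f^2 = -\curl(f)$, and the curl-free assumption kills exactly the second piece. Thus, modulo constants, $L_f[\varphi]$ reduces to $-\int_\Omega f \cdot \nabla^\perp \varphi$-type expressions that are antisymmetric in a way that pairs to zero against the symmetric part, and the key algebraic fact to establish is that
\[
 \Jac(f_\delta)[\varphi] = \Jac(f)[\varphi] + \delta^2 \int_\Omega \varphi,
\]
i.e. the $O(\delta)$ term drops out entirely when $\curl(f)=0$.

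First I would make the decomposition above rigorous at the level of smooth approximations $f_k \to f$ in $W^{s,\frac{2}{s}}$ and $\varphi_k \to \varphi$ in $W_0^{(1-s)2,\frac{1}{1-s}}$, using \Cref{la:weakjac} to pass to the limit in the $\Jac(f)[\varphi]$ term and the $L_f$ term (the latter is a genuinely bilinear expression in $f$ and $\varphi$ of the right homogeneity, so the same $a\ priori$ estimate applies — or, more simply, after one integration by parts $L_f[\varphi]$ becomes $\int_\Omega f \cdot (\text{first-order operator applied to }\varphi)$, which is continuous in $f \in L^{2/s}$ against $\varphi$ in the dual Sobolev space). Then I would invoke the distributional curl-free hypothesis to identify the surviving part of $L_f$: after integration by parts the mixed determinant, tested against $\varphi$, is a multiple of $\curl(f)[\varphi'] = 0$ for a suitable test function $\varphi'$ built from $\varphi$ and the linear map $g$; here one must check that $\varphi'$ is an admissible test function for the distribution $\curl(f)$, which a priori only acts on $C_c^\infty(\Omega)$ — so a density argument is needed, extending $\curl(f)=0$ from $C_c^\infty(\Omega)$ to $W_0^{s,\frac{2}{s}}(\Omega)$ (or whatever regularity $\varphi'$ has), using that $f \in W^{s,\frac{2}{s}} \subset L^{2/s}$ and the paired operator is first order.

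Once the identity $\Jac(f_\delta)[\varphi] = \Jac(f)[\varphi] + \delta^2\int_\Omega \varphi$ is in hand, the conclusion is immediate: if $\varphi \geq 0$ a.e. then $\Jac(f)[\varphi] \geq 0$ by the hypothesis $\Jac(f) \geq 0$, and $\delta^2 \int_\Omega \varphi \geq 0$ since $\delta \neq 0$, so $\Jac(f_\delta)[\varphi] \geq 0$, proving $\Jac(f_\delta) \geq 0$. Moreover if $\Jac(f_\delta)[\varphi] = 0$ then both nonnegative summands vanish; in particular $\delta^2\int_\Omega \varphi = 0$ forces $\int_\Omega \varphi = 0$, and combined with $\varphi \geq 0$ a.e. this gives $\varphi \equiv 0$. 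Hence $\Jac(f_\delta) > 0$ in the sense of \Cref{def:jac}.

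\textbf{Main obstacle.} I expect the only real subtlety to be the justification that the $O(\delta)$ mixed term pairs to zero — that is, interchanging the integration-by-parts and the limit $k \to \infty$ so as to legitimately feed a non-smooth (but sufficiently regular) test function into the hypothesis $\curl(f)=0$, which is stated only against $C_c^\infty(\Omega)$. The homogeneities work out ($s \geq \frac{2}{3}$ is exactly the borderline making $W^{s,\frac{2}{s}}_0$ test functions admissible, paralleling the discussion after \Cref{la:weakjac}), and the term is first-order rather than bilinear of the critical Jacobian type, so the required continuity/density statement should be routine — but it is the one place where care with function-space regularity is genuinely needed, rather than a formal manipulation. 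Everything else is bookkeeping with the multilinear expansion of the determinant and the definitions.
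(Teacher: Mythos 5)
Your proposal follows the same route as the paper: expand $\det(Df+\delta Dg)$ by multilinearity, kill the $O(\delta)$ cross term using the curl-free hypothesis, and conclude from the identity $\Jac(f_\delta)[\varphi]=\Jac(f)[\varphi]+\delta^2\int_\Omega\varphi$. But the algebra you wrote for the cross term is wrong, in a way that would break the argument if carried through as stated. For $g(x_1,x_2)=(-x_2,x_1)$ one has $Dg=\begin{pmatrix}0&-1\\ 1&0\end{pmatrix}$, and the cross term in $\det(Df+\delta Dg)$ computes directly to
\[
\partial_1 f^2 - \partial_2 f^1 \;=\; \curl(f),
\]
exactly --- not $\dv(f)$, nor a linear combination of $\dv(f)$ and $\curl(f)$ of which ``the curl-free assumption kills exactly the second piece.'' A $\dv(f)$ contribution would arise if one perturbed by the \emph{identity} map (then $Dg=I$ and the cross term is $\operatorname{tr}(Df)=\dv f$), not by the rotation. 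If the cross term really did contain a $\dv(f)$ piece, the hypothesis $\curl(f)=0$ would fail to eliminate the $O(\delta)$ term, the final identity would be false, and the proof would collapse, since nothing is assumed about $\dv(f)$. You do write the correct identity at the end of that paragraph, but the derivation you describe does not produce it and must be fixed.

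The density subtlety you flag as the ``main obstacle'' is not actually there, once the order of operations is chosen correctly. One works with $\varphi\in C_c^\infty(\Omega)$ throughout the limiting argument: for smooth $f_\eps\to f$ in $W^{s,2/s}$, integration by parts gives $\int_\Omega(\partial_1 f^2_\eps-\partial_2 f^1_\eps)\,\varphi=-\int_\Omega\bigl(f^2_\eps\,\partial_1\varphi-f^1_\eps\,\partial_2\varphi\bigr)$, which converges to $-\curl(f)[\varphi]=0$ because $f_\eps\to f$ in $L^{2/s}$ and $\nabla\varphi\in C_c^\infty$; this invokes $\curl(f)=0$ only against genuine $C_c^\infty$ test functions. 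The resulting identity $\Jac(f_\delta)[\varphi]=\Jac(f)[\varphi]+\delta^2\int_\Omega\varphi$, established for all $\varphi\in C_c^\infty(\Omega)$, then extends to $\varphi\in W_0^{s,2/s}(\Omega)$ by density (built into the definition of $W_0^{s,2/s}$) and the boundedness of all three terms --- the Jacobian functionals by \Cref{la:weakjac}, the integral trivially. So no extension of the distribution $\curl(f)$ beyond $C_c^\infty$ test functions is ever needed.
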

\begin{proof}
Let $f_\eps$ be an approximation of $f$ in $W^{s,\frac{2}{s}}(\Omega)$. We have
\[
 \det(Df_\eps + \delta (-x_2,x_1)^T) = \det(Df_\eps) + \delta^2 + \delta(\partial_1 f^2_\eps -\partial_2 f^1_\eps).
\]
That is, for any $\varphi \in C_c^\infty(\Omega)$,
\[
\begin{split}
 \Jac(f_\delta)[\varphi] =& \lim_{\eps \to 0} \int_{\Omega} \det(Df_\eps + \delta (-x_2,x_1)^T)\varphi\\
 =& \Jac(f)[\varphi] + \delta^2 \int \varphi + \delta \lim_{\eps \to 0} \int_{\Omega} \brac{\partial_1 f^2_\eps -\partial_2 f^1_\eps}\, \varphi.
\end{split}
 \]
Integrating by parts and the pointwise a.e. convergence of $f_\eps$ to $f$ implies
\[
 \int_{\Omega} \brac{\partial_1 f^2_\eps -\partial_2 f^1_\eps}\, \varphi = -\curl[f_\eps](\varphi) \xrightarrow{\eps \to 0} \curl[f](\varphi) = 0.
\]
Thus,
\[
  \Jac(f_\delta)[\varphi] = \Jac(f)[\varphi] + \delta^2 \int \varphi.
\]
In particular if $\varphi \geq 0$ and $\Jac(f_\delta)[\varphi] = 0$ we have $\varphi \equiv 0$, i.e. $\Jac(f_\delta) > 0$.
\end{proof}

\begin{proof}[Proof of Theorem~\ref{th:main2}]
By Lemma~\ref{la:curlfreedist} and Theorem~\ref{th:main} we have $f_\delta$ is continuous, and indeed we have a estimate on the modulus of continuity of $f_\delta$ by Proposition~\ref{pr:osciscontinuity}. This estimate is uniform in $\delta$, and by Arzela-Ascoli we conclude that $f = \lim_{\delta \to 0} f_\delta$ still enjoys the same continuity estimate.
\end{proof}

\appendix
\section{Two functions}
\begin{lemma}\label{la:weirdode1}
For any $c > 0$ there exists $d = d_c \in C^{1,1}([0,\infty),(0,\infty))$ such that
 \[
 \begin{cases} 
  d(t) + td'(t) \geq 0 \quad &\forall t > 0\\
 d(t) = t^{-1} \quad &\text{for $t > c/2$}\\
 d(t) \equiv d(0)\quad& \text{for $t \approx 0$}\\
 d(t) + td'(t) > 0 \quad& \text{for $t \approx 0$}.
 \end{cases}
\]
\end{lemma}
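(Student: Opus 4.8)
The key observation is the identity $d(t)+td'(t)=\frac{d}{dt}\bigl(t\,d(t)\bigr)$, so it is convenient to work with $g(t):=t\,d(t)$ rather than with $d$ directly. In these terms the four requirements translate into: $g\in C^{1,1}([0,\infty))$; $g'\ge 0$ on $(0,\infty)$; $g(t)=1$ for $t>c/2$; $g$ is linear with a strictly positive slope near $0$, say $g(t)=at$ for $t$ near $0$ with $a>0$ (this $a$ will be $d(0)$); and $g'>0$ near $0$. Conversely, given such a $g$ (note $g(0)=0$ automatically) we define $d(t):=g(t)/t$ for $t>0$ and $d(0):=a$; since $g$ is linear near $0$ we get $d\equiv a$ near $0$, and since $g\equiv 1$ on $(c/2,\infty)$ we get $d(t)=1/t$ there, so that $d(t)=t^{-1}$ and $d(t)\equiv d(0)$ hold as required. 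Positivity $d>0$ on $[0,\infty)$ follows once $g>0$ on $(0,\infty)$. Thus it suffices to construct such a $g$.

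I would construct $g$ by prescribing its derivative. Fix $a:=4/c$ and $\delta:=c/8$, and take $\gamma\colon[0,\infty)\to[0,\infty)$ continuous and piecewise linear — hence Lipschitz — with $\gamma\equiv a$ on $[0,\delta]$, $\gamma\equiv 0$ on $[c/2,\infty)$, $\gamma$ non-increasing on $[\delta,c/2]$, and
\[
\int_{\delta}^{c/2}\gamma \;=\; 1-a\delta \;=\;\tfrac12 .
\]
This is possible because, among continuous non-increasing piecewise-linear functions on $[\delta,c/2]$ with boundary values $a$ at $\delta$ and $0$ at $c/2$, the attained integrals fill the whole open interval $\bigl(0,\,a(c/2-\delta)\bigr)=\bigl(0,\tfrac32\bigr)\ni\tfrac12$. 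Set $g(t):=\int_0^t\gamma$. Then $g\in C^{1,1}$ since $g'=\gamma$ is Lipschitz; $g'=\gamma\ge 0$ with $\gamma\equiv a>0$ on $[0,\delta]$; $g(t)=at$ on $[0,\delta]$; $g(c/2)=a\delta+(1-a\delta)=1$ so $g\equiv 1$ on $[c/2,\infty)$; and $g(t)=at>0$ on $(0,\delta]$ together with monotonicity gives $g>0$ on $(0,\infty)$.

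It then remains to check that $d=g/t$, extended by $d(0)=a$, lies in $C^{1,1}([0,\infty),(0,\infty))$ and to read off the conclusions. We have $d\equiv a$ on $[0,\delta]$; on $[c/2,\infty)$, $d(t)=g(t)/t=1/t$ with $d'(t)=-1/t^2$ Lipschitz there (indeed $|d'(t)|\le 4/c^2$ and $|d''(t)|=2/t^3\le 16/c^3$); and on the compact interval $[\delta/2,c/2]$ the function $d=g/t$ is $C^{1,1}$ because $g\in C^{1,1}$ and $t$ stays bounded away from $0$. The one-sided derivatives of $d$ agree at $t=\delta$ (both equal $0$, using $\gamma(\delta)=a$ and $g(\delta)=a\delta$) and at $t=c/2$ (both equal $-4/c^2$), so these overlapping pieces glue to a globally $C^{1,1}$ function, positive on all of $[0,\infty)$. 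Finally, for every $t>0$,
\[
d(t)+t\,d'(t)=\frac{d}{dt}\bigl(t\,d(t)\bigr)=g'(t)=\gamma(t)\ge 0,
\]
with strict inequality $\gamma(t)=a>0$ for $t\in[0,\delta]$, i.e. for $t\approx 0$; moreover $d(t)=t^{-1}$ for $t>c/2$ and $d\equiv d(0)$ near $0$. This proves all four conditions. There is no genuine difficulty here; the only point deserving care is the $C^{1,1}$ regularity of $d=g/t$ across $t=0$, which is precisely why $g$ is forced to be exactly linear on $[0,\delta]$.
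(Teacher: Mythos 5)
Your proof is correct, and the substitution $g(t):=t\,d(t)$ is a genuinely different (and more systematic) route than the paper's. The paper first reduces to $c=2$ by the scaling $d_c(t):=c^{-1}d(t/c)$ and then simply writes down an explicit three-piece formula ($t^{-1}$ for $t\ge 1$, the quadratic $-t^2+t+1$ on $[\tfrac12,1]$, and the constant $\tfrac54$ on $[0,\tfrac12]$), after which it verifies by hand that $d\in C^{1,1}$ and that $d+t d'\ge 0$. You instead observe that all four conditions are conditions on $g=td$ (namely $g'\ge 0$, $g\equiv 1$ beyond $c/2$, $g$ linear with positive slope near $0$), construct $g$ by prescribing its derivative $\gamma$ as a piecewise-linear bump integrating to $1$, and then set $d=g/t$. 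This makes the role of the identity $d+td'=(td)'$ the organizing principle rather than an a posteriori check, and it explains structurally why the middle piece must exist (one needs $\int\gamma=1$); the only slightly delicate point, which you handle, is the $C^{1,1}$ regularity of $g/t$ at $t=0$, which is exactly why $g$ is forced to be linear on a neighborhood of $0$. The paper's approach is shorter and completely explicit; yours is a touch longer but more conceptual and makes the degrees of freedom transparent. Both are valid, and your arithmetic ($a=4/c$, $\delta=c/8$, $a\delta=\tfrac12$, $1-a\delta=\tfrac12\in(0,\tfrac32)$, and the matching of one-sided derivatives at $t=\delta$ and $t=c/2$) checks out.
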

\begin{proof}
Observe that if $d(t)$ satisfies the above assumptions for $c =2$, then $d_c(t) := c^{-1} d(t/c)$ satisfies the assumptions for generic $c>0$. So, w.l.o.g. $c = 2$.

Set
\[
 d(t) := \begin{cases}
            t^{-1} \quad &\text{$t \geq 1$}\\
            -t^2+t+1\quad &t \in [\frac{1}{2},1]\\
            \frac{5}{4}\quad &t \in [0,\frac{1}{2}].
         \end{cases}
\]
Observe that $\lim_{t \to 1^+} d(t)= \lim_{t \to 1^-} d(t) = 1$, and that $\lim_{t \to 1/2^+} d(t)= \lim_{t \to 1/2^-} d(t) = \frac{5}{4}$. Also,
\[
 d'(t) := \begin{cases}
            -t^{-2} \quad &\text{$t \geq 1$}\\
            -2t+1\quad &t \in [\frac{1}{2},1]\\
            0\quad &t \in [0,\frac{1}{2}].
         \end{cases}
\]
In particular, $\lim_{t \to 1^+} d'(t)= \lim_{t \to 1^-} d'(t) = -1$ and $\lim_{t \to 1/2^+} d'(t)= \lim_{t \to 1/2^-} d'(t) = 0$.
That is, $d \in C^{1,1}([0,\infty))$. The only thing left to check is that
\[
 d(t) + t d'(t) = \begin{cases}
                   0 \quad &t \geq 1\\
                     -3t^2+2t+1&t \in [\frac{1}{2},1]\\
                    \frac{5}{4}\quad &t \in [0,\frac{1}{2}]
                  \end{cases}
\]
is non-negative. But this is immediate.  \end{proof}

\begin{lemma}\label{la:weirdode2}
For any $n \in \N$ and any $\lambda > 0$ there exists $\pi \in C^{1,1}([0,\infty),(0,\infty))$ with the following properties:
 \[
 \begin{cases} 
  \pi(t)^{n-1}(\pi(t) + t\pi'(t)) \leq 1 \quad &\forall t \geq 0\\
  \pi(t)^{n-1}(\pi(t) + t\pi'(t)) < 1 \quad &\forall t \geq 2\lambda\\
  \pi(t) \equiv 1\quad& \text{for $t \leq \lambda$}\\
  \sup_{t} \pi(t) < C\quad &\text{with $C$ independent of $\lambda$}\\
  \sup_{t} \pi'(t) < C(\lambda).
 \end{cases}
\]
\end{lemma}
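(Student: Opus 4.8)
The plan is to remove the nonlinearity in the constraint $\pi(t)^{n-1}(\pi(t)+t\pi'(t))\le 1$ by the substitution $G(t):=(t\,\pi(t))^{n}$, which converts the left-hand side into $G'(t)/(n t^{n-1})$: indeed
\[
 \frac{d}{dt}\big(t\,\pi(t)\big)^{n}=n\,t^{n-1}\,\pi(t)^{n-1}\big(\pi(t)+t\,\pi'(t)\big),
\]
and one recognizes $\pi(t)^{n-1}(\pi(t)+t\pi'(t))$ as $\det W(v)$ for $|v|=t$, in the notation of the proof of \Cref{pr:diamest}. So I would reduce the lemma to constructing $G\in C^{1,1}([0,\infty),(0,\infty))$ with $G(t)=t^{n}$ for $t\le\lambda$, with $G'(t)\le n\,t^{n-1}$ for all $t>0$, with $G'(t)<n\,t^{n-1}$ for $t\ge 2\lambda$, and with $G$ bounded; then I would set $\pi(t):=G(t)^{1/n}/t$ for $t>0$ and $\pi(0):=1$ (which is consistent, since $\pi\equiv 1$ on $(0,\lambda]$).

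Next I would check that these properties of $G$ yield the five bullets. The identity $G=t^{n}$ on $[0,\lambda]$ gives $\pi\equiv 1$ there, which is the third bullet; the bound $G'\le nt^{n-1}$ is the first bullet, and its strict version on $[2\lambda,\infty)$ (where $t>0$) is the second. For the fourth bullet I would note that $G(t)-t^{n}$ vanishes on $[0,\lambda]$ and has derivative $G'(t)-nt^{n-1}\le 0$, hence $G(t)\le t^{n}$ everywhere, so $\sup_t\pi(t)\le 1$, a bound independent of $\lambda$. Since $G>0$ and is bounded away from $0$ on $[\lambda,\infty)$, $\pi=G^{1/n}/t$ is $C^{1,1}$ on $[\lambda/2,\infty)$; it is constant ($=1$) on $[0,\lambda]$, and at $t=\lambda$ both the value and the first derivative match (the derivative is $0$ on both sides, via $G(\lambda)=\lambda^{n}$, $G'(\lambda)=n\lambda^{n-1}$), so $\pi\in C^{1,1}([0,\infty),(0,\infty))$. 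Finally $\pi'\equiv 0$ on $[0,\lambda]$ and $\pi'<0$ wherever $G$ is constant, so $\sup_t\pi'(t)$ is finite and controlled by $\max_{[\lambda,2\lambda]}\pi'$, which depends on $\lambda$; this is the fifth bullet.

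Then I would build $G$ explicitly: put $G(t):=t^{n}$ on $[0,\lambda]$; on $[\lambda,2\lambda]$ let $G(\lambda)=\lambda^{n}$ and $G'(t):=n\lambda^{n-1}\psi(t)$ with $\psi(t):=2-t/\lambda$ (Lipschitz, decreasing from $\psi(\lambda)=1$ to $\psi(2\lambda)=0$); and set $G:=G(2\lambda)$ (a constant lying in $(\lambda^{n},(2\lambda)^{n})$) for $t\ge 2\lambda$. Then $G'$ is continuous at $t=\lambda$ (equal to $n\lambda^{n-1}$ from both sides) and at $t=2\lambda$ (equal to $0$ from both sides), and it is Lipschitz on each piece, so $G\in C^{1,1}$; moreover $G\ge\lambda^{n}>0$ on $[\lambda,\infty)$ and $G=t^{n}>0$ on $(0,\lambda]$; on $[\lambda,2\lambda]$ we have $G'(t)=n\lambda^{n-1}\psi(t)\le n\lambda^{n-1}\le nt^{n-1}$, on $(0,\lambda)$ equality holds, and on $[2\lambda,\infty)$ we have $G'(t)=0<nt^{n-1}$. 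Hence $G$ has all the required properties, and $\pi:=G^{1/n}/t$ finishes the proof.

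The construction is essentially forced; the one point requiring genuine care is the $C^{1,1}$-gluing at $t=\lambda$, where the constant $1$ meets $G^{1/n}/t$ — one must verify that the first derivatives (not only the values) agree, and then invoke Lipschitz continuity of $G'$; away from that junction everything is a routine computation. One could equally replace the linear $\psi$ by any Lipschitz profile decreasing from $1$ to $0$, or let $G$ grow sub-$t^{n}$ past $2\lambda$ instead of becoming constant; the choice above is merely the most economical.
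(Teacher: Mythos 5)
Your proof is correct and follows the same strategy as the paper: the substitution $G(t)=(t\pi(t))^n$ (the paper's $r(t)=t^n\pi(t)^n$) linearizes the constraint to $G'\le nt^{n-1}$, after which one builds a piecewise-defined $C^{1,1}$ function satisfying the resulting conditions and sets $\pi=G^{1/n}/t$. The only cosmetic differences are that the paper first rescales to $\lambda=1$ and lets $r$ grow like $t^n-\tfrac12 t$ at infinity, whereas you keep $\lambda$ general and make $G$ eventually constant; both are fine, and your version has the small advantage of yielding the explicit bound $\pi\le 1$.
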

\begin{proof}
Setting $\pi_\lambda(t) := \pi(t/\lambda)$ we can reduce to the case $\lambda = 1$, which we shall now consider.

Set $r(t) := t^n \pi(t)^n$. Then the differential inequalities become
\[
 \begin{cases} 
  r'(t) \leq n t^{n-1} \quad &\forall t \geq 0\\
  r'(t) < nt^{n-1} \quad &\forall t \geq 2\\
  r(t) = t^n \quad& \text{for $t \leq 1$}\\
 \end{cases}
\]
Clearly we can find a $C^{1,1}$-function $r$ that satisfies these conditions, {\it e.g.}
\[
 r(t) := 
 \begin{cases}
 r(t) = t^n \quad& \text{for $t \leq 1$}\\
 r(t) = t^n-a(t) \quad& \text{for $1 < t < 2$}\\
 r(t) = t^n - \frac{1}{2}t \quad & \text{for $t \geq 2$}
 \end{cases}
\]
where $a(t)$ is any smooth non-decreasing function such that $a(1) = a'(1) = 0$, $a(2) = 1$, and $a'(2) = \frac{1}{2}$.

Then $p(t) := t^{-1}\sqrt[n]{r(t)}$ is bounded, has derivatives bounded, and satisfies all the other assumptions as well.
\end{proof}

\bibliographystyle{abbrv}%
\bibliography{bib}%

\end{document}